\newtheorem{thm}{Theorem}[section]
\newtheorem{cor}[thm]{Corollary}
\newtheorem{lem}[thm]{Lemma}
\newtheorem{exm}[thm]{Example}
\newtheorem{prop}[thm]{Proposition}
\theoremstyle{definition}
\theoremstyle{remark}
\newtheorem{rem}[thm]{\bf Remark}
\numberwithin{equation}{section}
\begin{document}
\title[The lower extension groups and quotient categories]{The lower extension groups and quotient categories}
\author[Xiaofa Chen, Xiao-Wu Chen] {Xiaofa Chen, Xiao-Wu Chen$^*$}

\thanks{$^*$ The corresponding author}
%\thanks{}
\subjclass[2010]{18G25, 18E30, 16E30, 16E45}
\date{\today}

\thanks{E-mail: cxf2011$\symbol{64}$mail.ustc.edu.cn, xwchen$\symbol{64}$mail.ustc.edu.cn}
\keywords{relative homological algebra, factor category, extension group,  triangulated category, dg quotient}%

\maketitle

\dedicatory{}%
\commby{}%
%\begin{center}
%\end{center}

\begin{abstract}
For a certain full additive subcategory $\mathcal{X}$ of an additive category $\mathcal{A}$, one defines the lower extension groups in  relative homological algebra. We show that these groups are isomorphic to the suspended Hom groups in the Verdier quotient category of the bounded homotopy category of $\mathcal{A}$ by that of $\mathcal{X}$. Alternatively, these groups are isomorphic to the negative cohomological groups of the Hom complexes in the dg quotient category $\mathcal{A}/\mathcal{X}$, where both $\mathcal{A}$ and $\mathcal{X}$ are viewed as dg categories concentrated in degree zero.
\end{abstract}

\section{Introduction}
Let $\mathcal{A}$ be an additive category, and $\mathcal{X}$ be a full additive subcategory of $\mathcal{A}$. In relative homological algebra, $\mathcal{X}$-resolutions and $\mathcal{X}$-coresolutions play the role of projective resolutions and injective resolutions in classical homological algebra.

The extension groups ${\rm Ext}_{\mathcal{X}, -}^n(-, -)$ are obtained by substituting the $\mathcal{X}$-resolution in the contravariant entry of the Hom bifunctor ${\rm Hom}_\mathcal{A}(-, -)$ and then computing the cohomological groups. Dually, the extension groups ${\rm Ext}_{-, \mathcal{X}}^n(-, -)$ are obtained by substituting the $\mathcal{X}$-coresolution in the covariant entry. These groups might be called the \emph{upper extension groups}.

In general, these groups ${\rm Ext}_{\mathcal{X}, -}^n(-, -)$ and ${\rm Ext}_{-, \mathcal{X}}^n(-, -)$ are not related to each other. We mention that under certain conditions,  the upper extension groups are isomorphic to the suspended Hom groups in relative derived categories \cite{Chen11}.

The \emph{lower extension groups} ${\rm Ext}_{\mathcal{X}, n}(-, -)$ are obtained by substituting the $\mathcal{X}$-resolution in the covariant entry of ${\rm Hom}_\mathcal{A}(-, -)$ and then computing the cohomological groups. They enjoy the balanced property, that is, ${\rm Ext}_{\mathcal{X}, n}(-, -)$ can be  obtained alternatively by substituting the $\mathcal{X}$-coresolution in the contravariant entry of ${\rm Hom}_\mathcal{A}(-, -)$.

The lower extension groups arise in Gorenstein homological algebra \cite{EJ} and the general stabilization theory \cite{Bel}. However, they are less well known than the upper extension groups, probably due to the unusual entries to substitute the $\mathcal{X}$-(co)resolutions.

This work studies the lower extension groups in a completely different perspective.  Denote by $\mathcal{A}/[\mathcal{X}]$ the factor category of $\mathcal{A}$ by those morphisms factoring through $\mathcal{X}$. Denote by $\mathbf{K}^b(\mathcal{A})$ and $\mathbf{K}^b(\mathcal{X})$ the bounded homotopy categories of $\mathcal{A}$ and $\mathcal{X}$, respectively. Then we have the Verdier quotient triangulated category $\mathbf{K}^b(\mathcal{A})/\mathbf{K}^b(\mathcal{X})$.

The following canonical functor
$$\Phi\colon \mathbf{K}^b(\mathcal{A})/\mathbf{K}^b(\mathcal{X})\longrightarrow \mathbf{K}^b(\mathcal{A}/[\mathcal{X}])$$
sends a complex $Z$ in $\mathcal{A}$ to $Z$, viewed as a complex in $\mathcal{A}/[\mathcal{X}]$. An innocent problem is when $\Phi$ is an equivalence. A special case of this problem is implicitly treated in \cite{KV}, where it is used to construct the realization functor of a bounded $t$-structure in an algebraic triangulated category.

The following result is the motivation of this work, which extends the corresponding result contained in \cite[the proof in Subsection 3.2]{KV}.

\vskip 5pt

\noindent {\bf Proposition.}\; \emph{Assume that the lower extension groups ${\rm Ext}_{\mathcal{X}, n}(-, -)$ are defined. Then the canonical functor $\Phi$ is an equivalence if and only if ${\rm Ext}_{\mathcal{X}, n}(-, -)$ vanish for all $n\geq 1$.}

\vskip 5pt

For the proof of the above result, we actually show that the lower extension groups are isomorphic to certain suspended Hom groups in $\mathbf{K}^b(\mathcal{A})/\mathbf{K}^b(\mathcal{X})$; see Theorem \ref{thm:1}. We view the additive categories $\mathcal{A}$ and $\mathcal{X}$ as dg categories concentrated in  degree zero. Then we have the dg quotient category $\mathcal{A}/\mathcal{X}$ in the sense of \cite{Kel99, Dri}. We observe that the lower extension groups are isomorphic to the negative cohomological groups of the Hom complexes in $\mathcal{A}/\mathcal{X}$; see Proposition \ref{prop:dg-qu}. Then we have another proof of Theorem \ref{thm:1} under slightly different assumptions.

To justify the title, we observe that the following three quotient categories are involved: the additive quotient $\mathcal{A}/[\mathcal{X}]$, the triangulated quotient $\mathbf{K}^b(\mathcal{A})/\mathbf{K}^b(\mathcal{X})$ and the dg quotient $\mathcal{A}/\mathcal{X}$. It is well known that they are related as follows: $\mathcal{A}/[\mathcal{X}]$ is equivalent to the homotopy category $H^0(\mathcal{A}/\mathcal{X})$ of $\mathcal{A}/\mathcal{X}$, and  $\mathbf{K}^b(\mathcal{A})/\mathbf{K}^b(\mathcal{X})$ is equivalent to the triangulated hull $(\mathcal{A}/\mathcal{X})^{\rm tr}$ of $\mathcal{A}/\mathcal{X}$.

The paper is organized as follows. In Section 2, we prove that the lower extension groups are isomorphic to the Tor groups of certain modules over $\mathcal{X}$. We prove Theorem \ref{thm:1} in Section 3. The canonical functor $\Phi$ is studied in Section 4. Moreover, a new characterization of a hereditary abelian category is given; see Corollary \ref{cor:here}. In Section 5, we study the dg quotient category $\mathcal{A}/\mathcal{X}$ and prove Proposition \ref{prop:dg-qu}. Then we interpret the Hom groups in the Verdier quotient category $\mathbf{K}^b(\mathcal{A})/\mathbf{K}^b(\mathcal{X})$ as the Tor groups, yielding another proof of Theorem \ref{thm:1}; see Proposition \ref{prop:dg-tor}.

In the sequel, we sometimes abbreviate  ${\rm Hom}_\mathcal{A}(-, -)$ as $\mathcal{A}(-, -)$. We use the cohomological notation for complexes.

\section{The lower extensions as Tor groups}

Let $\mathcal{A}$ be an additive category and $\mathcal{X}\subseteq \mathcal{A}$ a full additive subcategory. Denote by $[\mathcal{X}]$ the two-sided ideal formed by morphisms factoring through $\mathcal{X}$. Then we have the factor category $\mathcal{A}/[\mathcal{X}]$. For two objects $A$ and $B$, we have
$$\mathcal{A}/{[\mathcal{X}]}(A, B)=\mathcal{A}(A, B)/{[\mathcal{X}](A, B)}.$$
The corresponding  coset of a morphism $f\colon A\rightarrow B$ in $\mathcal{A}$ is denoted by $[f]$.

Recall that a \emph{$\mathcal{X}$-resolution} of an object $B$ means a complex in $\mathcal{A}$
$$X_B\colon \cdots \longrightarrow X_B^{-2} \stackrel{d^{-2}}\longrightarrow X^{-1}_B\stackrel{\partial}\longrightarrow B\longrightarrow 0$$
 such that $X_B^{-i}\in \mathcal{X}$ for each $i\geq 1$ and that ${\rm Hom}_\mathcal{A}(X, X_B)$ is acyclic for each object $X\in \mathcal{X}$. In particular, the morphism $\partial$ is a right $\mathcal{X}$-approximation of $B$, that is, any morphism $t\colon T\rightarrow B$ with $T\in \mathcal{X}$ factors through $\partial$. Dually, a \emph{$\mathcal{X}$-coresoultion} of $A$ means a complex
 $$_AX\colon  0\longrightarrow A \longrightarrow {_AX}^1 \longrightarrow {_AX}^2\longrightarrow \cdots $$
 such that each $_AX^i$ lies in $\mathcal{X}$ and that ${\rm Hom}_\mathcal{A}(_AX, X)$ is acyclic for each $X\in \mathcal{X}$.

We assume that the above $\mathcal{X}$-resolution and $\mathcal{X}$-coresolution exist. Then the \emph{upper extension groups} ${\rm Ext}_{\mathcal{X}, -}^n(B, A)=H^{n+1}{\rm Hom}_\mathcal{A}(X_B^{\leq -1}, A)$ and ${\rm Ext}_{-, \mathcal{X}}^n(B, A)=H^{n+1}{\rm Hom}_\mathcal{A}(B, {_AX}^{\geq 1})$ for $n\in \mathbb{Z}$. Here, $X_B^{\leq -1}$ and ${_AX}^{\geq 1}$ denote the brutal truncations of the relevant complexes.

In general, these groups ${\rm Ext}_{\mathcal{X}, -}^n(B, A)$ and ${\rm Ext}_{-, \mathcal{X}}^n(B, A)$ are not related to each other. Therefore, the following balanced property is quite different.

\begin{lem}\label{lem:bala}
We assume that the above $\mathcal{X}$-resolution $X_B$ and $\mathcal{X}$-coresolution $_AX$ exist. Then for each $n\in \mathbb{Z}$, there is an isomorphism
$$H^{-n}{\rm Hom}_\mathcal{A}(A, X_B)\simeq H^{-n}{\rm Hom}_\mathcal{A}({_AX}, B).$$
\end{lem}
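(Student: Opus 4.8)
The plan is to build a single bicomplex whose two spectral sequences (or, more elementarily here, whose two iterated "collapse" computations) produce the two sides. Concretely, form the double complex
$$
C^{p,q} \;=\; {\rm Hom}_\mathcal{A}\big({_AX}^{\,p},\, X_B^{\,q}\big),
$$
where we place $A$ in degree $0$ of the coresolution (so the nonzero coresolution terms ${_AX}^i$ sit in degrees $i\geq 1$, together with $A$ itself in degree $0$) and $B$ in degree $0$ of the resolution (so $X_B^{-i}$ sits in degree $-i\leq -1$, together with $B$ in degree $0$). The two differentials are induced by the differential of $_AX$ in the first variable and by that of $X_B$ in the second. I would work with the \emph{augmented} double complex, i.e. include the rows/columns indexed by $A$ and by $B$, and exploit that ${\rm Hom}_\mathcal{A}({_AX}, X)$ is acyclic for $X\in\mathcal{X}$ and ${\rm Hom}_\mathcal{A}(X, X_B)$ is acyclic for $X\in\mathcal{X}$.

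First I would take homology in the $p$-direction. For a fixed $q$ with $X_B^{\,q}=X_B^{-i}\in\mathcal{X}$ ($i\geq 1$), the column ${\rm Hom}_\mathcal{A}({_AX}^{\bullet}, X_B^{-i})$ is exact except at $p=0$ (this is precisely the defining acyclicity of the $\mathcal{X}$-coresolution, since $X_B^{-i}\in\mathcal{X}$), with homology ${\rm Hom}_\mathcal{A}(A, X_B^{-i})$ in degree $0$. For $q=0$ the column is ${\rm Hom}_\mathcal{A}({_AX}^{\bullet}, B)$, whose homology is by definition governed by nothing special — but since we are looking at the augmented picture, the net effect is that the $p$-homology of the (appropriately augmented) double complex collapses onto the row ${\rm Hom}_\mathcal{A}(A, X_B^{\bullet})$, and its subsequent $q$-homology is $H^{-n}{\rm Hom}_\mathcal{A}(A, X_B)$. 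Symmetrically, taking homology first in the $q$-direction uses the acyclicity of ${\rm Hom}_\mathcal{A}(X, X_B)$ for $X\in\mathcal{X}$ to collapse onto the column ${\rm Hom}_\mathcal{A}({_AX}^{\bullet}, B)$, whose $p$-homology is $H^{-n}{\rm Hom}_\mathcal{A}({_AX}, B)$. Comparing the two total homologies of one and the same (total complex of the) bicomplex yields the desired isomorphism, and one should check it is natural in $A$ and $B$.

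The bookkeeping subtlety — and the step I expect to be the main obstacle — is that both $_AX$ and $X_B$ are only \emph{one-sidedly} bounded (the coresolution extends to $+\infty$, the resolution to $-\infty$), so I must be careful about which total complex (direct sum versus product) I use and whether the spectral-sequence/collapse argument converges. In each total degree $N$, the relevant diagonal $\{(p,q): q-p = N\}$ meets the nonzero region $\{p\geq 0,\ q\leq 0\}$ in only finitely many spots, so the total complex is well defined with no convergence issues; this is the saving grace and should be stated explicitly. A second, more technical point is the handling of the augmentation: rather than invoking spectral sequences, it is cleaner to argue directly that the augmentation maps ${\rm Hom}_\mathcal{A}(A, X_B)\to {\rm Tot}(C)$ and ${\rm Hom}_\mathcal{A}({_AX}, B)\to {\rm Tot}(C)$ (suitably signed and shifted) are quasi-isomorphisms, using that each of the "new" rows/columns added by augmentation is exact by the acyclicity hypotheses. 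I would phrase the whole thing as: the total complex of the non-augmented $C^{p,q}$ computes, via one filtration, ${\rm Hom}_\mathcal{A}(A, X_B)[\text{shift}]$ up to quasi-isomorphism, and via the other filtration ${\rm Hom}_\mathcal{A}({_AX}, B)[\text{shift}]$; passing to $H^{-n}$ gives the claim. Everything else is routine sign-tracking.
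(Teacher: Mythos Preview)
Your approach is correct and is exactly the paper's one-line argument: run the two collapsing spectral sequences of the Hom bicomplex ${\rm Hom}_\mathcal{A}({_AX}, X_B)$, using the finiteness of the diagonals you noted to ensure convergence. The only slip is expositional---in the fully augmented picture, for $q\leq -1$ the $p$-complex ${\rm Hom}_\mathcal{A}({_AX}^{\bullet}, X_B^{q})$ is acyclic \emph{everywhere} (not ``except at $p=0$''), so that spectral sequence collapses onto the $q=0$ row ${\rm Hom}_\mathcal{A}({_AX}, B)$ rather than onto ${\rm Hom}_\mathcal{A}(A, X_B)$; you have the two filtrations swapped in that paragraph, but you flag the augmentation bookkeeping as the delicate point and your final summary gets it right.
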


\begin{proof}
This follows immediately by considering the collapsing spectral sequences associated to the Hom bicomplex ${\rm Hom}_\mathcal{A}({_AX}, X_B)$.
\end{proof}

The above common cohomology groups are denoted by ${\rm Ext}_{\mathcal{X}, n}(A, B)$, called the \emph{lower extension groups}. We observe that ${\rm Ext}_{\mathcal{X}, -n}(A, B)=0$ for $n\geq 1$ and that
$${\rm Ext}_{\mathcal{X}, 0}(A, B)\simeq \mathcal{A}/[\mathcal{X}](A, B).$$

In what follows, if either $X_B$ or $_AX$ exists, we still talk about the lower extension groups ${\rm Ext}_{\mathcal{X}, n}(A, B)$.

Assume that $\mathcal{X}$ is skeletally small. Denote by $\mathcal{X}\mbox{-Mod}$ the abelian category of left $\mathcal{X}$-modules. Here,  a left $\mathcal{X}$-module is by definition an additive functor $\mathcal{X}\rightarrow \mathbb{Z}\mbox{-Mod}$. Dually, $\mbox{Mod-}\mathcal{X}$ denotes the category of right $\mathcal{X}$-modules. Then we have the well-defined tensor bifunctor
$$-\otimes_\mathcal{X}-\colon \mbox{Mod-}\mathcal{X}\times \mathcal{X}\mbox{-Mod}\longrightarrow \mathbb{Z}\mbox{-Mod}$$
and the corresponding Tor groups ${\rm Tor}^\mathcal{X}_n(-, -)$ for $n\geq 1$.

For example, $\mathcal{A}(-, B)$ and $\mathcal{A}(A, -)$ will be viewed as a right $\mathcal{X}$-module and a left $\mathcal{X}$-module, respectively. Then the tensor product is explicitly given by
$$\mathcal{A}(-, B)\otimes_\mathcal{X} \mathcal{A}(A, -)=(\bigoplus_{X\in \mathcal{X}} \mathcal{A}(X, B)\otimes_\mathbb{Z} \mathcal{A}(A, X))/I,$$
where $I$ is a $\mathbb{Z}$-submodule generated by $b\circ x\otimes a-b\otimes x\circ a$ for all $x\colon X\rightarrow X'$ in $\mathcal{X}$, $a\colon A\rightarrow X$ and $b\colon X'\rightarrow B$. Then there is a canonical map
\begin{align}\label{equ:can}
{\rm can}\colon  \mathcal{A}(-, B)\otimes_\mathcal{X} \mathcal{A}(A, -)\longrightarrow \mathcal{A}(A, B), \quad g\otimes f\mapsto g\circ f,\end{align}
where $g\colon X\rightarrow B$ and $f\colon A\rightarrow X$ for some object $X\in \mathcal{X}$. The following sequence is exact by definition
$$\mathcal{A}(-, B)\otimes_\mathcal{X} \mathcal{A}(A, -)\stackrel{{\rm can}}\longrightarrow \mathcal{A}(A, B)\stackrel{{\rm pr}}\longrightarrow \mathcal{A}/[\mathcal{X}](A, B)\longrightarrow 0,$$
where ``${\rm pr}$" denotes the projection.

\begin{prop}\label{prop:ext-tor}
Assume that $\mathcal{X}$ is skeletally small and that either $X_B$ or $_AX$ exists. Then there are isomorphisms
$${\rm Ext}_{\mathcal{X}, n}(A, B)\simeq {\rm Tor}^\mathcal{X}_{n-1}(\mathcal{A}(-, B), \mathcal{A}(A, -))$$
for $n\geq 2$; moreover, ${\rm Ext}_{\mathcal{X}, 1}(A, B)$ is isomorphic to the kernel of (\ref{equ:can}).
\end{prop}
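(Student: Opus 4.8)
The plan is to compute the lower extension groups by choosing a convenient free resolution of the right $\mathcal{X}$-module $\mathcal{A}(-,B)$ and tensoring with $\mathcal{A}(A,-)$, then identifying the resulting complex with the one defining ${\rm Ext}_{\mathcal{X},n}(A,B)$ via a $\mathcal{X}$-resolution. Suppose $X_B$ exists. First I would observe that applying the Yoneda-type functor $\mathcal{X}\ni X\mapsto \mathcal{A}(X,-)$ to the truncated resolution $X_B^{\leq -1}$ produces a complex of representable (hence projective) right $\mathcal{X}$-modules
$$\cdots \longrightarrow \mathcal{A}(-, X_B^{-2})\longrightarrow \mathcal{A}(-, X_B^{-1})\longrightarrow 0,$$
and the defining property of a $\mathcal{X}$-resolution — that $\mathcal{A}(X, X_B)$ is acyclic for every $X\in\mathcal{X}$ — says exactly that this complex, augmented by $\mathcal{A}(-,B)$ in degree $0$, is exact when evaluated at any $X\in\mathcal{X}$; that is, it is a projective resolution of $\mathcal{A}(-,B)$ in $\mbox{Mod-}\mathcal{X}$ (after reindexing $X_B^{-i}$ into homological degree $i-1$). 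This is the conceptual heart of the argument: a $\mathcal{X}$-resolution of $B$ is nothing but a projective resolution of the representable module $\mathcal{A}(-,B)$.

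Next I would tensor this projective resolution with the left $\mathcal{X}$-module $\mathcal{A}(A,-)$. Using the canonical isomorphism $\mathcal{A}(-,X)\otimes_\mathcal{X}\mathcal{A}(A,-)\simeq \mathcal{A}(A,X)$ for $X\in\mathcal{X}$ (the Yoneda lemma for tensor products over $\mathcal{X}$), the tensored complex becomes
$$\cdots \longrightarrow \mathcal{A}(A, X_B^{-2})\longrightarrow \mathcal{A}(A, X_B^{-1})\longrightarrow 0,$$
which is precisely ${\rm Hom}_\mathcal{A}(A, X_B^{\leq -1})$ with the same reindexing. By definition ${\rm Tor}^\mathcal{X}_{n-1}(\mathcal{A}(-,B),\mathcal{A}(A,-))$ is the $(n-1)$-st homology of this complex, while by Lemma \ref{lem:bala} and the discussion following it, ${\rm Ext}_{\mathcal{X},n}(A,B)=H^{-n}{\rm Hom}_\mathcal{A}(A,X_B)$. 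Tracking the degree shifts — $X_B^{-1}$ sits in cohomological degree $-1$ of ${\rm Hom}_\mathcal{A}(A,X_B)$ but in homological degree $0$ of the Tor complex, and $B$ itself is not part of the truncated complex — one checks that $H^{-n}$ of the full Hom complex matches $H_{n-1}$ of the tensored truncated complex for $n\geq 2$, and that for $n=1$ the comparison involves the augmentation map $\mathcal{A}(A,X_B^{-1})\to \mathcal{A}(A,B)$, whose interaction with the differential $\mathcal{A}(A,X_B^{-2})\to\mathcal{A}(A,X_B^{-1})$ identifies ${\rm Ext}_{\mathcal{X},1}(A,B)$ with the kernel of (\ref{equ:can}). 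Indeed the image of (\ref{equ:can}) is the submodule $[\mathcal{X}](A,B)$, and unwinding one finds that the tensored augmented complex computes, in degree $0$, the cokernel $\mathcal{A}/[\mathcal{X}](A,B)\simeq{\rm Ext}_{\mathcal{X},0}(A,B)$ and, in degree $1$ after correcting for the fact that $\mathcal{A}(-,B)$ is itself not projective, the kernel of ${\rm can}$.

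If instead only $_AX$ exists, I would run the dual argument: $_AX^{\geq 1}$, via $X\mapsto \mathcal{A}(-,X)$, gives a projective resolution of the left $\mathcal{X}$-module $\mathcal{A}(A,-)$, and tensoring with the right module $\mathcal{A}(-,B)$ reproduces ${\rm Hom}_\mathcal{A}({_AX},B)$; Lemma \ref{lem:bala} guarantees the same answer, so the statement is well-posed under the weaker hypothesis "either $X_B$ or $_AX$ exists." The main obstacle I anticipate is purely bookkeeping: getting the index shift exactly right across the three different conventions (cohomological indexing of $X_B$, homological indexing of Tor, and the off-by-one coming from the augmentation) and handling the low-degree edge case $n=1$ cleanly, where one must pass from a statement about homology of a projective resolution to a statement about the kernel of the canonical map ${\rm can}$ directly on the (non-projective) module $\mathcal{A}(-,B)$. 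No deep input is needed beyond the Yoneda lemma and the definition of a $\mathcal{X}$-resolution.
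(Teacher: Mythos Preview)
Your proposal is correct and follows essentially the same approach as the paper: use the $\mathcal{X}$-resolution $X_B$ to produce a projective resolution of the right $\mathcal{X}$-module $\mathcal{A}(-,B)$ by representables, tensor with $\mathcal{A}(A,-)$, and invoke the Yoneda isomorphism $\mathcal{X}(-,X_B^{-n})\otimes_\mathcal{X}\mathcal{A}(A,-)\simeq \mathcal{A}(A,X_B^{-n})$ to identify the resulting complex with ${\rm Hom}_\mathcal{A}(A,X_B^{\leq -1})$. The paper handles the $n=1$ case with the single observation that the cokernel of $\mathcal{A}(A,d^{-2})$ is $\mathcal{A}(-,B)\otimes_\mathcal{X}\mathcal{A}(A,-)$, which is exactly the content of your more discursive treatment of the edge case.
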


\begin{proof}
We assume that the $\mathcal{X}$-resolution $X_B$ exists, and the case where $_AX$ exists is similar.

The $\mathcal{X}$-resolution $X_B$ gives rise to the following projective resolution of the right $\mathcal{X}$-module $\mathcal{A}(-, B)$
$$\cdots \longrightarrow \mathcal{X}(-, X_B^{-2}) \stackrel{\mathcal{X}(-, d^{-2})}\longrightarrow \mathcal{X}(-, X_B^{-1}) \longrightarrow \mathcal{A}(-, B)\longrightarrow 0.$$
Applying $-\otimes_\mathcal{X}\mathcal{A}(A, -)$ to it and using the natural isomorphisms
$$\mathcal{X}(-, X_B^{-n})\otimes_\mathcal{X} \mathcal{A}(A, -)\simeq \mathcal{A}(A, X_B^{-n}), $$
the required isomorphisms follow immediately. For the last isomorphism, we just observe that the cokernel of $\mathcal{A}(A, d^{-2})$ is isomorphic to $\mathcal{A}(-, B)\otimes_\mathcal{X} \mathcal{A}(A, -)$.
\end{proof}

\begin{rem}
Since the Tor groups have the balanced property, the above isomorphisms yield another proof of the balanced property of the lower extension groups in Lemma \ref{lem:bala}.
\end{rem}

\begin{exm}\label{exm:here}
{\rm Let $\mathcal{A}$ be an abelian category with enough projectives and enough injectives. Denote by $\mathcal{P}$ (\emph{resp}. $\mathcal{I}$) the full subcategory of projective objects (\emph{resp}. injective objects). The factor category $\mathcal{A}/[\mathcal{P}]$ is usually denoted by $\underline{\mathcal{A}}$. Similarly, we write $\mathcal{A}/[\mathcal{I}]$ as $\overline{\mathcal{A}}$. These factor categories are known as the \emph{stable categories}.

We claim that ${\rm Ext}_{\mathcal{P},n}(-, B)=0$ if and only if ${\rm proj.dim}\; B\leq n$. It suffices to show the ``only if" part. Assume that ${\rm proj.dim}\; B>n$. Then in a projective resolution of $B$
$$\cdots \longrightarrow P^{-n-1}\stackrel{d^{-n-1}} \longrightarrow P^{-n}\longrightarrow \cdots \longrightarrow P^{-1}\longrightarrow B\longrightarrow 0$$
the image $A={\rm Im}\; d^{-n-1}$ is not projective. Then the inclusion $A\rightarrow P^{-n}$ yields a nonzero element in ${\rm Ext}_{\mathcal{P},n}(A, B)$, a contradiction.

Indeed, the above claim can be deduced from the following isomorphism
\begin{align}\label{equ:lower}
{\rm Ext}_{\mathcal{P},n}(A, B)\simeq \underline{\mathcal{A}}(A, \Omega^n(B))
\end{align}
for all $n\geq 0$. Here, $\Omega^n(B)$ denotes the $n$-th syzygy of $B$ and $\Omega^0(B)=B$ by convention.

Recall that an abelian category $\mathcal{A}$ is \emph{hereditary} if its global dimension is at most one. The above claim yields the following characterization of hereditary abelian category: $\mathcal{A}$ is hereditary if and only if ${\rm Ext}_{\mathcal{P},n}(-, -)=0$ for any $n\geq 1$. Dually, $\mathcal{A}$ is hereditary if and only if ${\rm Ext}_{\mathcal{I}, n}(-, -)=0$ for any $n\geq 1$.
}\end{exm}

The following example is studied in \cite[Subsection 3.2]{KV}.

\begin{exm}\label{exm:KV}
{\rm
Let $\mathcal{E}$ be a Frobenius exact category. Denote by $\mathcal{P}$ the full subcategory formed by all the projective-injective objects. The stable category $\underline{\mathcal{E}}=\mathcal{E}/[\mathcal{P}]$ is naturally triangulated, whose suspension functor is denoted by $\Sigma$.

Let $\mathcal{A}\subseteq \mathcal{E}$ be a full additive subcategory containing $\mathcal{P}$.  Denote the factor category $\mathcal{A}/[\mathcal{P}]$ by $\underline{\mathcal{A}}$.  Similar to (\ref{equ:lower}),  we observe  an isomorphism
$${\rm Ext}_{\mathcal{P},n}(A, B)\simeq \underline{\mathcal{E}}(\Sigma^n(A), B)$$
for any $A, B\in \mathcal{A}$ and each $n\geq 0$.}\end{exm}

\section{The lower extensions as suspended Hom groups}

Denote by $\mathbf{K}^b(\mathcal{A})$ be homotopy category of bounded complexes in $\mathcal{A}$. The suspension functor is denoted by $\Sigma$. We will identify an object $A\in \mathcal{A}$ with the corresponding stalk complex concentrated in degree zero. Then $\mathcal{A}$ is viewed as a full subcategory of $\mathbf{K}^b(\mathcal{A})$. For each $n\in \mathbb{Z}$, the suspended stalk complex $\Sigma^{-n}(X)$ is concentrated in degree $n$.

Similarly, we have the homotopy category $\mathbf{K}^b(\mathcal{X})$ of bounded complexes in $\mathcal{X}$. It is a triangulated subcategory of $\mathbf{K}^b(\mathcal{A})$. Denote by $\mathbf{K}^b(\mathcal{A})/\mathbf{K}^b(\mathcal{X})$ the Verdier quotient triangulated category.

We will assume that the category $\mathbf{K}^b(\mathcal{A})/\mathbf{K}^b(\mathcal{X})$  is well defined, that is, all the Hom groups in $\mathbf{K}^b(\mathcal{A})/\mathbf{K}^b(\mathcal{X})$  are sets. For example, this happens provided that  $\mathcal{X}$ is skeletally small.

\begin{thm}\label{thm:1}
Let $A, B$ be two objects in $\mathcal{A}$, which are also viewed as objects in $\mathbf{K}^b(\mathcal{A})/\mathbf{K}^b(\mathcal{X})$. Then the following statements hold.
\begin{enumerate}
\item ${\rm Hom}_{\mathbf{K}^b(\mathcal{A})/\mathbf{K}^b(\mathcal{X})}(\Sigma^{-n}(A), B)=0$ for $n\geq 1$.
\item The natural map ${\mathcal{A}/[\mathcal{X}]}(A, B)\rightarrow {\rm Hom}_{\mathbf{K}^b(\mathcal{A})/\mathbf{K}^b(\mathcal{X})}(A, B)$ is an isomorphism. Consequently, the canonical functor  $\mathcal{A}/[\mathcal{X}]\rightarrow \mathbf{K}^b(\mathcal{A})/\mathbf{K}^b(\mathcal{X})$ is fully faithful.
    \item Assume that either $X_B$ or $_AX$ exists. Then there are isomorphisms
    $${\rm Ext}_{\mathcal{X}, n}(A, B)\simeq {\rm Hom}_{\mathbf{K}^b(\mathcal{A})/\mathbf{K}^b(\mathcal{X})}(\Sigma^n(A), B)$$
    for all $n\geq 1$.
    \end{enumerate}
\end{thm}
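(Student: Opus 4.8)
The plan is to compute morphism groups in the Verdier quotient $\mathbf{K}^b(\mathcal{A})/\mathbf{K}^b(\mathcal{X})$ directly from the calculus of fractions, using the ($\mathcal{X}$-co)resolution as an explicit replacement. Recall that a morphism from $C$ to $D$ in the Verdier quotient is a roofdiagram $C \xleftarrow{s} C' \xrightarrow{f} D$ with $s$ a morphism whose cone lies in $\mathbf{K}^b(\mathcal{X})$; two roofs are identified in the usual way. For part (2), I would first observe that the natural functor $\mathcal{A}/[\mathcal{X}] \to \mathbf{K}^b(\mathcal{A})/\mathbf{K}^b(\mathcal{X})$ is well defined (morphisms factoring through $\mathcal{X}$ become zero, since a stalk complex in $\mathcal{X}$ is an object of $\mathbf{K}^b(\mathcal{X})$), and then prove surjectivity and injectivity of the induced map on Hom groups by a direct roof argument: given a roof $A \xleftarrow{s} Z \xrightarrow{f} B$, the truncation/cone hypotheses let one replace $Z$ up to isomorphism in the quotient by a two-term or stalk complex, and reduce the roof to a genuine morphism $A \to B$ in $\mathcal{A}$. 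This is essentially the argument sketched in \cite[Subsection 3.2]{KV}, now run in the relative setting.

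For parts (1) and (3), the key technical input is that the $\mathcal{X}$-resolution $X_B$ (or $\mathcal{X}$-coresolution $_AX$) provides, for each $m \geq 1$, a morphism in $\mathbf{K}^b(\mathcal{A})$ from the brutally truncated complex $\sigma^{\geq -m} X_B$ to the stalk complex $B$, whose cone has all terms in $\mathcal{X}$ except the syzygy term — more precisely, the truncations fit into triangles that become isomorphisms in $\mathbf{K}^b(\mathcal{A})/\mathbf{K}^b(\mathcal{X})$ after enough steps, identifying $B$ in the quotient with the stalk complex $\Omega^m_{\mathcal{X}}(B)$ placed in degree $-m$, up to a summand in $\mathbf{K}^b(\mathcal{X})$. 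Granting this, $\mathrm{Hom}_{\mathbf{K}^b(\mathcal{A})/\mathbf{K}^b(\mathcal{X})}(\Sigma^{\mp n}(A), B)$ is computed by mapping the stalk complex $A$ (in degree $\pm n$) into the resolution of $B$, i.e.\ by the complex $\mathrm{Hom}_{\mathcal{A}}(A, X_B)$ modulo homotopy and modulo maps through $\mathcal{X}$; taking cohomology in degree $-n$ gives exactly $H^{-n}\mathrm{Hom}_{\mathcal{A}}(A, X_B) = {\rm Ext}_{\mathcal{X}, n}(A, B)$ for $n \geq 1$ (where the $[\mathcal{X}]$-correction is invisible since it only affects degree zero), and $0$ for $n \leq -1$ since $X_B$ is concentrated in nonpositive degrees. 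I would make this precise via an adjunction/localization argument: the canonical functor $\mathbf{K}^b(\mathcal{A}) \to \mathbf{K}^b(\mathcal{A})/\mathbf{K}^b(\mathcal{X})$ sends the quasi-isomorphism-like map $X_B \to B$ to an isomorphism, so $\mathrm{Hom}(\Sigma^j A, B) \cong \mathrm{Hom}(\Sigma^j A, X_B)$, and the latter is computed in $\mathbf{K}^b(\mathcal{A})$ itself once $X_B$ is replaced by a suitable bounded truncation, because any map from the bounded complex $\Sigma^j A$ lands in a bounded piece.

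The main obstacle I anticipate is the boundedness bookkeeping: $X_B$ is unbounded below, so it is not literally an object of $\mathbf{K}^b(\mathcal{A})$, and one cannot directly say "$X_B \to B$ is inverted in $\mathbf{K}^b(\mathcal{A})/\mathbf{K}^b(\mathcal{X})$." The fix is to work with brutal truncations $\sigma^{\geq -m} X_B$, which are bounded, note that the map $\sigma^{\geq -m} X_B \to B$ has cone isomorphic in the quotient to a shift of the stalk complex on the $m$-th $\mathcal{X}$-syzygy (this uses the defining acyclicity of $\mathrm{Hom}_{\mathcal{A}}(X, X_B)$ to show the cone is "$\mathbf{K}^b(\mathcal{X})$-local" in the relevant range), and then observe that for fixed $j$ the group $\mathrm{Hom}_{\mathbf{K}^b(\mathcal{A})/\mathbf{K}^b(\mathcal{X})}(\Sigma^j A, B)$ stabilizes once $m \gg |j|$. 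A clean way to organize all of this is to prove first that for a bounded-below complex $Y$ with $Y^i \in \mathcal{X}$ for $i \ll 0$ and $\mathrm{Hom}_{\mathcal{A}}(X, Y)$ acyclic in low degrees for all $X \in \mathcal{X}$, the stalk complex $\mathrm{coker}$ (or the relevant cocycle) and $Y$ agree in the quotient in a range; this lemma then yields (1), (2) and (3) uniformly by setting $Y = X_B$ shifted to degree zero. Parts (1) and (2) are the $m=0,1$ cases and serve as the base of the induction that proves (3).
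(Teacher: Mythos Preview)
Your overall strategy---replace $B$ by brutal truncations of its $\mathcal{X}$-resolution and compute via the calculus of fractions---is the same as the paper's, but there are two genuine gaps. First, you invoke syzygies $\Omega^m_{\mathcal{X}}(B)$ as objects of $\mathcal{A}$ (``identifying $B$ in the quotient with the stalk complex $\Omega^m_{\mathcal{X}}(B)$ placed in degree $-m$''). Since $\mathcal{A}$ is only additive and the $\mathcal{X}$-resolution is merely $\mathcal{X}$-acyclic, these kernels need not exist; the paper never uses them and works with the truncated complex $X_B^{\geq -m}$ directly. Second, the natural map goes from $B$ \emph{into} $X_B^{\geq -m}$ (inclusion of the degree-zero term), not the other way; its cone, with components $X_B^{-1},\dots,X_B^{-m}$, lies in $\mathbf{K}^b(\mathcal{X})$, so it is inverted in the quotient. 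Your map ``$\sigma^{\geq -m}X_B \to B$'' does not exist as a chain map for $m\geq 1$.

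More seriously, the crux of your argument---``$\mathrm{Hom}(\Sigma^j A, X_B)$ \ldots is computed in $\mathbf{K}^b(\mathcal{A})$ itself once $X_B$ is replaced by a suitable bounded truncation''---is precisely the step that requires proof, and you have not supplied it. Knowing $B\to X_B^{\geq -m}$ is inverted gives $\mathrm{Hom}_{\mathrm{quot}}(\Sigma^n A, B)\cong \mathrm{Hom}_{\mathrm{quot}}(\Sigma^n A, X_B^{\geq -m})$, but the right-hand side is still a Hom in the quotient, still computed by roofs; the truncation is \emph{not} $\mathbf{K}^b(\mathcal{X})$-local (its leftmost term lies in $\mathcal{X}$), so one cannot simply drop the localization. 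The paper handles this via Lemma~\ref{lem:res}: part~(1) shows the full resolution $X_B$ is local in $\mathbf{K}^{-}(\mathcal{A})$, and part~(2) converts this into the statement that every right roof $\Sigma^n A \to \mathrm{Cone}(f) \Leftarrow B$ refines to one through $X_B^{\geq -m}$ for $m$ large, whence surjectivity of the explicit map $\phi$; injectivity then comes from~(1). Your final paragraph gestures at such a lemma but does not formulate it. Finally, note that the paper proves parts~(1) and~(2) unconditionally by direct roof manipulation (brutal truncation of the cone $C$), without assuming any resolution exists; your plan to derive them from the resolution would impose an unnecessary hypothesis.
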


\begin{proof}
Recall that any morphism from $\Sigma^m(A)$ to $B$ in $\mathbf{K}^b(\mathcal{A})/\mathbf{K}^b(\mathcal{X})$ is realized as a right roof
\begin{align}\label{equ:rf}
\Sigma^m(A) \stackrel{g}\longrightarrow C \Longleftarrow B
\end{align}
where $C={\rm Cone}(f)$ is the mapping cone of a chain map $f\colon X\rightarrow B$ for some bounded complex $X\in \mathbf{K}^b(\mathcal{X})$. In other words, $C$ has the following form
$$\cdots \longrightarrow X^0\stackrel{\begin{pmatrix}
{f^0}\\
{-d_X^0}
\end{pmatrix}}\longrightarrow B\oplus X^1 \longrightarrow X^2\longrightarrow \cdots $$
and $B\Longrightarrow C$ is the inclusion. Here, by the double arrow, we indicate a morphism which is localized in forming the Verdier quotient. If $m<0$, then $g \colon \Sigma^m(A)\rightarrow C$ factors through the brutal truncation $C^{\geq -m}$. Since $C^{\geq -m}$ lies in $\mathbf{K}^b(\mathcal{X})$, $g$ becomes zero in $\mathbf{K}^b(\mathcal{A})/\mathbf{K}^b(\mathcal{X})$. This proves (1).

For (2), we observe the following commutative diagram
\[\xymatrix{
& C \\
A\ar[dr]_-{a} \ar[ur]^-{g} \ar[r] & C^{\geq 0} \ar[d]_-{\rm pr} \ar[u]^-{\rm inc} & B \ar@{=>}[l] \ar@{=>}[ul] \ar@{=}[dl]\\
& B,
}\]
where ``${\rm inc}$" and ``${\rm pr}$" denote the obvious inclusion and projection, respectively. Here, the map $a$ is obtained by $g^0=\begin{pmatrix} a\\x \end{pmatrix}$ for some morphism $x\in A\rightarrow X^1$. It follows that the given morphism is equivalent to the trivial roof
$$A \longrightarrow B \stackrel{{\rm Id_B}}\Longleftarrow B.$$
In other words, the natural map
 $${\rm Hom}_{\mathcal{A}/[\mathcal{X}]}(A, B)\longrightarrow {\rm Hom}_{\mathbf{K}^b(\mathcal{A})/\mathbf{K}^b(\mathcal{X})}(A, B)$$
 is surjective. For its injectivity, take a morphism $b\colon A\rightarrow B$ which is mapped to zero. It means that $b$ factors through some complex $Y\in \mathbf{K}^b(\mathcal{X})$. Then it follows that $b$ actually factors through $Y^0$ in $\mathcal{A}$, that is, $[b]=0$ in $\mathcal{A}/[\mathcal{X}]$, as required.

 To prove (3), we assume that the $\mathcal{X}$-resolution $X_B$ exists.  The case where $_AX$ exists is similar, where we use left roofs instead of right roofs.

 The required isomorphism
 $$\phi\colon {\rm Ext}_{\mathcal{X}, n}(A, B)\longrightarrow  {\rm Hom}_{\mathbf{K}^b(\mathcal{A})/\mathbf{K}^b(\mathcal{X})}(\Sigma^n(A), B)$$
 sends a class $\bar{c}$ in ${\rm Ext}_{\mathcal{X}, n}(A, B)$ to the right roof
 \begin{align}\label{equ:rf2}
 \Sigma^n(A) \stackrel{c'}\longrightarrow X^{\geq -n}_B \stackrel{\rm inc}\Longleftarrow B.
 \end{align}
 Here, $c\colon A\rightarrow X_B^{-n}$ represents the class $\bar{c}$, $X^{\geq -n}_B $ denotes the brutal truncation of $X_B$,  and the chain map $c'$ is induced by $c$.

 We assume that we are given a right roof (\ref{equ:rf}) with $m=n$. By Lemma \ref{lem:res}(2) below, we infer that the inclusion $B\rightarrow X^{\geq -l}_B$  factor through the natural map $B\rightarrow C={\rm Cone}(f)$ for sufficiently large $l$. Therefore, the right roof (\ref{equ:rf}) is equivalent to
  \begin{align*}
 \Sigma^n(A) \longrightarrow X^{\geq -l}_B \stackrel{\rm inc}\Longleftarrow B.
 \end{align*}
It is clear that the above roof is equivalent to a right roof of the form (\ref{equ:rf2}). This proves the surjectivity of $\phi$.

For the injectivity of $\phi$, we take a morphism $c\colon A\rightarrow X_B^{-n}$ such that the right roof (\ref{equ:rf2}) is equivalent to zero. It follows that $c'\colon \Sigma^n(A)\rightarrow X_B$ factors through some complex $Y\in \mathbf{K}^b(\mathcal{X})$. The factorization is assumed to be $\Sigma^n(A)\stackrel{u} \rightarrow Y\stackrel{v}\rightarrow X_B$. However, by Lemma \ref{lem:res}(1) $v$ is homotopic to zero, and so is $c'$.  In other words, the map $c$ factors through $d^{-n-1}$. Then the corresponding class $\bar{c}$ in ${\rm Ext}_{\mathcal{X}, n}(A, B)$ is zero, as required.
 \end{proof}

Denote by $\mathbf{K}^{-}(\mathcal{A})$ the homotopy category of bounded-above complexes in $\mathcal{A}$.

\begin{lem}\label{lem:res}
Let $X_B$ be the $\mathcal{X}$-resolution of $B$ and $X$ be a bounded complex in $\mathcal{X}$. Then the following statements hold.
\begin{enumerate}
\item ${\rm Hom}_{\mathbf{K}^-(\mathcal{A})}(Y, X_B)=0$ for any complex $Y\in \mathbf{K}^b(\mathcal{X})$.
    \item Given any chain map $f\colon X\rightarrow B$, the inclusion $B\rightarrow X^{\geq -m}_B$ factors through the natural map $B\rightarrow {\rm Cone}(f)$ for sufficiently large $m$.
\end{enumerate}
\end{lem}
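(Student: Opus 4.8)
The plan is to prove (1) first and then deduce (2) from a mapping cone argument. For (1), recall that the defining property of the $\mathcal{X}$-resolution $X_B$ is that $\mathrm{Hom}_\mathcal{A}(X', X_B)$ is acyclic for every object $X'\in\mathcal{X}$ (equivalently, $X_B$ is right orthogonal, in the homotopy category, to every stalk complex on an object of $\mathcal{X}$). I would first upgrade this to: $\mathrm{Hom}_{\mathbf{K}^-(\mathcal{A})}(\Sigma^k(X'), X_B)=0$ for every $X'\in\mathcal{X}$ and every $k\in\mathbb{Z}$, which is just a reformulation since $\mathrm{Hom}_{\mathbf{K}^-(\mathcal{A})}(\Sigma^k(X'), X_B)\cong H^{-k}\mathrm{Hom}_\mathcal{A}(X', X_B)$. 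Then, given a bounded complex $Y\in\mathbf{K}^b(\mathcal{X})$, I would induct on the number of nonzero terms of $Y$, using the brutal truncation triangle $\sigma^{>i}Y \to Y \to \sigma^{\le i}Y \to \Sigma(\sigma^{>i}Y)$ (a triangle in $\mathbf{K}^-(\mathcal{A})$) and the long exact sequence obtained by applying $\mathrm{Hom}_{\mathbf{K}^-(\mathcal{A})}(-, X_B)$. The base case is a suspended stalk complex on an object of $\mathcal{X}$, which is the reformulation above; the inductive step is immediate from the long exact sequence and the five-lemma-style squeeze. This gives (1).

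For (2), fix a chain map $f\colon X\to B$ with $X\in\mathbf{K}^b(\mathcal{X})$, and consider the triangle $X \xrightarrow{f} B \xrightarrow{\iota} \mathrm{Cone}(f) \to \Sigma(X)$ in $\mathbf{K}^b(\mathcal{A})$, hence also in $\mathbf{K}^-(\mathcal{A})$. Composing with the canonical quasi-isomorphism-type map $\pi_m\colon X_B \to X_B^{\ge -m}$ is not quite what I want; instead I would look at the canonical chain map $\varepsilon\colon X_B \to B$ (the augmentation) and its compatibility with brutal truncation: for $m\ge 1$ the composite $X_B \to X_B^{\ge -m}$ followed by the obvious map $X_B^{\ge -m}\to B$ recovers $\varepsilon$ up to homotopy, and more to the point $\varepsilon$ factors through $X_B^{\ge -m}$, with the "tail" $X_B^{\le -m-1}$ being a bounded-above complex in $\mathcal{X}$. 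The key point: applying $\mathrm{Hom}_{\mathbf{K}^-(\mathcal{A})}(X, -)$ to the truncation triangle for $X_B$ and using that $X\in\mathbf{K}^b(\mathcal{X})$ together with part (1) (applied to the bounded-above-in-$\mathcal{X}$ tail, which also satisfies the vanishing — here I may need a mild bounded-above version of (1), or simply choose $m$ large enough that the relevant obstruction group is reached) shows that $f\colon X\to B$, followed by the inclusion $B\to X_B^{\ge -m}$, is null-homotopic for $m\gg 0$. Then applying $\mathrm{Hom}_{\mathbf{K}^-(\mathcal{A})}(-, X_B^{\ge -m})$ to the cone triangle $X\xrightarrow{f} B\xrightarrow{\iota}\mathrm{Cone}(f)\to\Sigma(X)$ shows that the map $B\to X_B^{\ge -m}$, which dies on $X$, factors through $\iota\colon B\to\mathrm{Cone}(f)$, which is exactly the claim.

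The main obstacle I anticipate is the bookkeeping in (2): making precise "for sufficiently large $m$" and checking that the vanishing in (1) is being applied to exactly the right (possibly bounded-above) complex in $\mathcal{X}$, since $X_B$ itself is only bounded above, not bounded. I would handle this by noting that $\mathrm{Hom}_{\mathbf{K}^-(\mathcal{A})}(X, X_B)=0$ by (1) even though $X_B$ is unbounded below (part (1) as stated already covers $X_B$ in the second slot), and that the composite $f$ followed by $X_B\to X_B^{\ge -m}$ is the image of an element of $\mathrm{Hom}_{\mathbf{K}^-(\mathcal{A})}(X, X_B)$ under $(\pi_m)_*$ once $m$ exceeds the top degree of $X$ — so it is automatically zero, with no genuine limiting argument needed. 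The rest is a formal diagram chase with the two triangles.
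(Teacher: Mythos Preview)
Your proposal is correct and matches the paper's proof almost exactly: for (1) the paper also reduces to stalk complexes $\Sigma^n(Z)$ with $Z\in\mathcal{X}$ and then invokes the obvious d\'evissage, and for (2) the paper applies the cohomological functor $\mathrm{Hom}_{\mathbf{K}^-(\mathcal{A})}(-,X_B)$ to the triangle $X\to B\to\mathrm{Cone}(f)\to\Sigma X$, uses (1) to factor $B\to X_B$ through $\mathrm{Cone}(f)$, and then truncates using that $\mathrm{Cone}(f)$ is bounded. Your variant for (2) (first truncate to $X_B^{\ge -m}$, then factor) is an equivalent reordering; note that the condition ``$m$ exceeds the top degree of $X$'' is in fact unnecessary, since the composite $X\xrightarrow{f} B\to X_B^{\ge -m}$ is visibly $(\pi_m)_*$ of $X\xrightarrow{f} B\hookrightarrow X_B\in\mathrm{Hom}_{\mathbf{K}^-(\mathcal{A})}(X,X_B)=0$ for every $m\ge 0$.
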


\begin{proof}
By definition, ${\rm Hom}_{\mathbf{K}^{-}(\mathcal{A})}(\Sigma^n(Z), X_B)=0$ for any $Z\in \mathcal{X}$ and $n\in \mathbb{Z}$. Then (1) follows immediately.

We apply the cohomological functor ${\rm Hom}_{\mathbf{K}^{-}(\mathcal{A})}(-, X_B)$ to the canonical exact triangle
$$X\stackrel{f} \longrightarrow B \stackrel{\iota}\longrightarrow {\rm Cone}(f) \longrightarrow \Sigma X. $$
By (1) the inclusion $B\rightarrow X_B$ factors through $\iota$. Since ${\rm Cone}(f)$ is a bounded complex, the required factorization follows immediately.
\end{proof}

\section{The canonical functor}

The following canonical functor
$$\Phi\colon \mathbf{K}^b(\mathcal{A})/\mathbf{K}^b(\mathcal{X})\longrightarrow \mathbf{K}^b(\mathcal{A}/[\mathcal{X}])$$
sends a complex $Z$ in $\mathcal{A}$ to $Z$, where the latter is viewed as a complex in $\mathcal{A}/[\mathcal{X}]$.

\begin{prop}\label{prop:can}
Keep the notation as above. Then the following statements hold.
\begin{enumerate}
\item If $\Phi$ is full, then it is dense.
\item The functor $\Phi$ is faithful if and only if it is an equivalence.
\item Assume that the category $\mathcal{A}$ is Krull-Schmidt. Then $\Phi$ is full if and only if it is an equivalence.
\end{enumerate}
\end{prop}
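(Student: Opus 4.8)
\noindent\textit{Proof proposal.}
The plan is to prove the three statements in the order given, each using the previous ones, reducing everything to two facts already available: the realization of morphisms in $\mathbf{K}^b(\mathcal{A})/\mathbf{K}^b(\mathcal{X})$ by roofs, and Theorem \ref{thm:1}. Write $\mathcal{S}=\mathbf{K}^b(\mathcal{A})/\mathbf{K}^b(\mathcal{X})$ and $\mathcal{T}=\mathbf{K}^b(\mathcal{A}/[\mathcal{X}])$. I first record that $\Phi$ is a triangle functor and that its composition with the canonical functor $\mathcal{A}/[\mathcal{X}]\to\mathcal{S}$ is the fully faithful embedding of $\mathcal{A}/[\mathcal{X}]$ into $\mathcal{T}$ as stalk complexes; combined with Theorem \ref{thm:1}(2) this shows that $\Phi$ restricts to an isomorphism $\mathrm{End}_{\mathcal{S}}(A)\xrightarrow{\ \sim\ }\mathrm{End}_{\mathcal{T}}(\Phi A)$ for every $A\in\mathcal{A}$. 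I also record that $\mathrm{Hom}_{\mathcal{T}}(\Sigma^{n}A,B)=0$ for $n\neq 0$ when $A,B\in\mathcal{A}$, since the stalk complexes $\Sigma^{n}A$ and $B$ are concentrated in the distinct degrees $-n$ and $0$ (and $\mathrm{Hom}_{\mathcal{T}}(A,B)=\mathcal{A}/[\mathcal{X}](A,B)$).

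For (1), I would use that the essential image of a full triangle functor is a triangulated subcategory: given a triangle $X'\to Y'\to C'\to\Sigma X'$ with $X'\cong\Phi X$ and $Y'\cong\Phi Y$, fullness yields $f\colon X\to Y$ realizing the first map, whence $C'\cong\Phi(\mathrm{Cone}\,f)$; so the essential image is closed under cones and shifts. Since the stalk complexes over $\mathcal{A}/[\mathcal{X}]$ lie in the image of $\Phi$ and generate $\mathcal{T}$ as a triangulated category (via the brutal truncation triangles), the essential image is all of $\mathcal{T}$, i.e.\ $\Phi$ is dense.

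For (2), the nontrivial direction is ``faithful $\Rightarrow$ equivalence''. Faithfulness of $\Phi$, together with $\mathrm{Hom}_{\mathcal{T}}(\Sigma^{n}A,B)=0$ for $n\geq 1$ and Theorem \ref{thm:1}(1), forces $\mathrm{Hom}_{\mathcal{S}}(\Sigma^{n}A,B)=0$ for all $n\geq 1$ and all $A,B\in\mathcal{A}$. Granting this vanishing, a d\'evissage shows $\Phi$ is fully faithful: the comparison map $\mathrm{Hom}_{\mathcal{S}}(M,N)\to\mathrm{Hom}_{\mathcal{T}}(\Phi M,\Phi N)$ is bijective when $M,N$ are stalk complexes (reduce by a shift to $N=B\in\mathcal{A}$ and check the three cases $n<0$, $n=0$, $n>0$ using Theorem \ref{thm:1}(1), Theorem \ref{thm:1}(2), the remark above, and the vanishing), and both the class of such $N$ with $M$ fixed, and then the class of such $M$ with $N$ fixed, are triangulated subcategories by the five lemma applied to the $\mathrm{Hom}$ long exact sequences; since stalk complexes generate $\mathcal{S}$, the comparison map is bijective in general. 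Then $\Phi$ is full, hence dense by (1), hence an equivalence. The converse implication is trivial, and this also exhibits the converse for (3) as trivial.

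The main point, and where I expect the work to lie, is the implication ``$\Phi$ full $\Rightarrow$ $\Phi$ faithful'' in (3); after it, (1) and (2) finish the proof. By (2) it suffices to show $\mathrm{Hom}_{\mathcal{S}}(\Sigma^{n}A,B)=0$ for $n\geq 1$ and $A,B\in\mathcal{A}$. Suppose $0\neq\alpha\colon\Sigma^{n}A\to B$ in $\mathcal{S}$ with $n\geq 1$, and extend it to an exact triangle $\Sigma^{n}A\xrightarrow{\alpha}B\xrightarrow{j}C\to\Sigma^{n+1}A$. Applying $\Phi$ and using $\mathrm{Hom}_{\mathcal{T}}(\Sigma^{n}A,B)=0$, the resulting triangle in $\mathcal{T}$ splits, so $\Phi(j)$ is a split monomorphism; choose a retraction $\rho$. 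By fullness lift $\rho$ to $\tilde\rho\colon C\to B$ in $\mathcal{S}$. Then $\Phi(\tilde\rho\circ j)=\mathrm{id}_{\Phi B}$, and since $\Phi$ is injective on $\mathrm{End}_{\mathcal{S}}(B)$ we get $\tilde\rho\circ j=\mathrm{id}_{B}$, whence $\alpha=\tilde\rho\circ j\circ\alpha=0$, a contradiction. This uses only fullness of $\Phi$ and the injectivity of $\Phi$ on the endomorphism ring of an object of $\mathcal{A}$; the Krull--Schmidt hypothesis on $\mathcal{A}$ is available throughout (it makes $\mathbf{K}^b(\mathcal{A})$ Krull--Schmidt and $\mathcal{S}$ idempotent complete), and I would double-check at the writing stage whether it is genuinely needed here or whether it supports a more structural variant of the last step, for instance one passing through the kernel of $\Phi$.
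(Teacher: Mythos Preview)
Your proofs of (1) and (2) follow the paper's approach closely; the only cosmetic difference is that you spell out the d\'evissage yourself while the paper cites Beilinson's lemma for the same step.

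Your proof of (3) is genuinely different from the paper's, and in fact stronger. The paper argues as follows: by a result of Rickard, a full triangle functor is faithful as soon as it is faithful on objects; then, using the Krull--Schmidt hypothesis, one replaces any $Y\in\mathbf{K}^b(\mathcal{A})$ by a minimal complex and observes that $\Phi(Y)\simeq 0$ forces each $Y^i$ to lie in ${\rm add}\,\mathcal{X}$, so $Y$ is a summand of an object of $\mathbf{K}^b(\mathcal{X})$. This uses Krull--Schmidt essentially, via the existence and rigidity of minimal complexes. Your argument instead produces the vanishing ${\rm Hom}_{\mathcal{S}}(\Sigma^nA,B)=0$ for $n\geq 1$ directly: complete $\alpha$ to a triangle, apply $\Phi$, lift a retraction of $\Phi(j)$ by fullness, and use that $\Phi$ is already bijective on ${\rm End}_{\mathcal{S}}(B)$ for $B\in\mathcal{A}$ (Theorem~\ref{thm:1}(2)) to conclude $j$ is split mono in $\mathcal{S}$, whence $\alpha=0$. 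This is correct, and as you suspected it nowhere invokes Krull--Schmidt; so your route establishes that ``$\Phi$ full $\Rightarrow$ $\Phi$ equivalence'' without that hypothesis. What the paper's approach buys is a structural description of the kernel of $\Phi$ (objects with components in ${\rm add}\,\mathcal{X}$), while yours buys a sharper statement with a shorter, purely triangle-theoretic proof.
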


\begin{proof}
(1) Recall that the essential image of a full triangle functor is necessarily a triangulated subcategory. The only triangulated subcategory of $\mathbf{K}^b(\mathcal{A}/[\mathcal{X}])$ containing $\mathcal{A}/[\mathcal{X}]$ is itself. Then the result follows immediately.

(2) Assume that $\Phi$ is faithful. Let $A, B \in \mathcal{A}$. Recall that  ${\rm Hom}_{\mathbf{K}^b(\mathcal{A}/[\mathcal{X}])}(\Sigma^n(A), B)=0$ for $n\neq 0$ and ${\rm Hom}_{\mathbf{K}^b(\mathcal{A}/[\mathcal{X}])}(A, B)\simeq {\mathcal{A}/[\mathcal{X}]}(A, B)$. The faithfulness of $\Phi$ forces that ${\rm Hom}_{\mathbf{K}^b(\mathcal{A})/\mathbf{K}^b(\mathcal{X})}(\Sigma^n(A), B)=0$ for $n\neq 0$. In view of Theorem \ref{thm:1}(2), we have an isomorphism
$${\rm Hom}_{\mathbf{K}^b(\mathcal{A})/\mathbf{K}^b(\mathcal{X})}(\Sigma^n(A), B)\simeq  {\rm Hom}_{\mathbf{K}^b(\mathcal{A}/[\mathcal{X}])}(\Sigma^n(A), B)$$
for each $n\in \mathbb{Z}$. It follows from \cite[Lemma 1]{Bei} that $\Phi$ is fully faithful. By (1), we infer that $\Phi$ is an equivalence.

(3) Recall a well-known fact: a full triangle functor is faithful if and  only if it is faithful on object; see \cite[p.446]{Ric}. Therefore, it suffices to prove that $\Phi$ is faithful on objects, that is, for any complex $Y\in \mathbf{K}^b(\mathcal{A})$, $\Phi(Y)\simeq 0$ implies that $Y$ is isomorphic to a direct summand of some object in $\mathbf{K}^b(\mathcal{X})$.

We observe that $\mathcal{A}/[\mathcal{X}]$ is also Krull-Schmidt. We may assume that $Y$ is a \emph{minimal} complex in $\mathbf{K}^b(\mathcal{A})$, that is, each differential $Y^i\rightarrow Y^{i+1}$ is a radical morphism in $\mathcal{A}$. Then $\Phi(Y)$ is also a minimal complex in $\mathbf{K}^b(\mathcal{A}/[\mathcal{X}])$. Since a null-homotopic minimal complex is necessarily isomorphic to the zero complex in the category of complexes, we infer that each component $Y^i$ is zero in $\mathcal{A}/[\mathcal{X}]$. In other words, each $Y^i$ is isomorphic to a direct summand of some object in $\mathcal{X}$. It follows that $Y$ is isomorphic to a direct summand of some object in $\mathbf{K}^b(\mathcal{X})$, as required.
\end{proof}

\begin{rem}
In general, the denseness of $\Phi$ will not imply its fully-faithfulness; see Example \ref{exm:non-faith}.
\end{rem}

\begin{exm}
{\rm Let $k$ be a field and $\Lambda=k[t]/(t^3)$ be the truncated polynomial algebra. Denote by $\Lambda\mbox{-mod}$ the abelian category of finite dimensional $\Lambda$-modules and by $\Lambda\mbox{-proj}$ the full subcategory formed by projective modules. The stable module category $\Lambda\mbox{-\underline{mod}}$ is by definition $\Lambda\mbox{-mod}/[\Lambda\mbox{-proj}]$. We claim that the canonical functor
$$\Phi\colon \mathbf{K}^b(\Lambda\mbox{-mod})/\mathbf{K}^b(\Lambda\mbox{-proj}) \longrightarrow \mathbf{K}^b(\Lambda\mbox{-\underline{mod}})$$
is not dense. Then it is non-faithful and non-full by Proposition \ref{prop:can}.

Consider the simple $\Lambda$-module $k$ and the $2$-dimensional $\Lambda$-module $M=k[t]/(t^2)$. We have the following complex in $\Lambda\mbox{-\underline{mod}}$
$$0\longrightarrow M \stackrel{[\pi]}\longrightarrow k \stackrel{[\iota]}\longrightarrow M\stackrel{[\pi]} \longrightarrow k \longrightarrow 0,$$
where $\pi$ denotes the projection and $\iota$ is the natural embedding. We observe that this complex does not lie in the essential image of $\Phi$.
}\end{exm}

As pointed out in the introduction, the following result is our main motivation to study the lower extension groups.

\begin{prop}\label{prop:KV}
Assume that $\mathcal{X}\subseteq \mathcal{A}$ satisfies the following condition: $X_B$ exists for each $B\in \mathcal{A}$, or $_AX$ exists for each $A\in \mathcal{A}$. Then the canonical functor $\Phi$ is an equivalence if and only if ${\rm Ext}_{\mathcal{X}, n}(-, -)=0$ for $n\geq 1$.
\end{prop}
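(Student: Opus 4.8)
The plan is to deduce Proposition~\ref{prop:KV} from the combination of Theorem~\ref{thm:1} and Proposition~\ref{prop:can}, treating the two directions separately. First I would observe that under the standing hypothesis (either $X_B$ exists for all $B$, or ${}_AX$ exists for all $A$), Theorem~\ref{thm:1}(3) applies to every pair $A, B\in\mathcal{A}$, so we have natural isomorphisms
\[
{\rm Ext}_{\mathcal{X}, n}(A, B)\simeq {\rm Hom}_{\mathbf{K}^b(\mathcal{A})/\mathbf{K}^b(\mathcal{X})}(\Sigma^n(A), B)
\]
for all $n\geq 1$, while Theorem~\ref{thm:1}(1) gives the vanishing of ${\rm Hom}_{\mathbf{K}^b(\mathcal{A})/\mathbf{K}^b(\mathcal{X})}(\Sigma^{-n}(A), B)$ for $n\geq 1$, and Theorem~\ref{thm:1}(2) identifies the degree-zero Hom group with $\mathcal{A}/[\mathcal{X}](A, B)$.

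For the ``only if'' direction, assume $\Phi$ is an equivalence. Then for $n\geq 1$ we have
\[
{\rm Hom}_{\mathbf{K}^b(\mathcal{A})/\mathbf{K}^b(\mathcal{X})}(\Sigma^n(A), B)\simeq {\rm Hom}_{\mathbf{K}^b(\mathcal{A}/[\mathcal{X}])}(\Sigma^n(A), B)=0,
\]
since a nonzero morphism between stalk complexes in different degrees in a bounded homotopy category of an additive category must vanish (there are no morphisms from $\Sigma^n(A)$ to $B$ in $\mathbf{K}^b(\mathcal{A}/[\mathcal{X}])$ when $n\neq 0$). Combined with the isomorphism from Theorem~\ref{thm:1}(3), this forces ${\rm Ext}_{\mathcal{X}, n}(A, B)=0$ for all $n\geq 1$ and all $A, B$.

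For the ``if'' direction, assume ${\rm Ext}_{\mathcal{X}, n}(-, -)=0$ for all $n\geq 1$. Then Theorem~\ref{thm:1}, parts (1)--(3) together, shows that for all $A, B\in\mathcal{A}$ and all $n\in\mathbb{Z}$,
\[
{\rm Hom}_{\mathbf{K}^b(\mathcal{A})/\mathbf{K}^b(\mathcal{X})}(\Sigma^n(A), B)\simeq {\rm Hom}_{\mathbf{K}^b(\mathcal{A}/[\mathcal{X}])}(\Sigma^n(A), B),
\]
the only nontrivial case being $n=0$, which is Theorem~\ref{thm:1}(2). I would then invoke \cite[Lemma 1]{Bei} (exactly as in the proof of Proposition~\ref{prop:can}(2)) to conclude that $\Phi$ is fully faithful, and then Proposition~\ref{prop:can}(1) gives that $\Phi$ is dense, hence an equivalence. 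The one point needing care is that the isomorphisms assembled from Theorem~\ref{thm:1} are the ones \emph{induced by $\Phi$}, so that \cite[Lemma 1]{Bei} genuinely applies; this is where the naturality built into the statement of Theorem~\ref{thm:1}(2)--(3) (the map there is ``the natural map'', i.e. the one coming from $\Phi$) must be used, and I would make that explicit. I expect this verification of compatibility — that the abstract isomorphisms of Hom groups are realized by the canonical functor itself — to be the only real obstacle; everything else is a direct citation of the earlier results.
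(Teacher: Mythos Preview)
Your proposal is correct and follows essentially the same route as the paper: the paper's proof simply notes that, by the argument already given for Proposition~\ref{prop:can}(2), $\Phi$ is an equivalence if and only if ${\rm Hom}_{\mathbf{K}^b(\mathcal{A})/\mathbf{K}^b(\mathcal{X})}(\Sigma^n(A),B)=0$ for all $n\neq 0$, and then invokes Theorem~\ref{thm:1}. Your compatibility worry is harmless here, since for $n\neq 0$ both source and target of $\Phi$ on Hom groups vanish (so $\Phi$ is trivially an isomorphism there), and for $n=0$ the map of Theorem~\ref{thm:1}(2) is visibly inverse to the map induced by $\Phi$.
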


\begin{proof}
As we saw in the proof of Proposition \ref{prop:can}(2), the functor $\Phi$ is an equivalence if and only if
$${\rm Hom}_{\mathbf{K}^b(\mathcal{A})/\mathbf{K}^b(\mathcal{X})}(\Sigma^n(A), B)=0$$
for  any objects $A, B\in \mathcal{A}$ and $n\neq 0$. Then we are done by Theorem \ref{thm:1}.
\end{proof}

The ``if" part of the following immediate consequence  is implicitly contained in the proof of \cite[Subsection 3.2]{KV}.

\begin{cor}
Keep the notation as in Example \ref{exm:KV}. Then the canonical functor $ \mathbf{K}^b(\mathcal{A})/\mathbf{K}^b(\mathcal{P})\longrightarrow \mathbf{K}^b(\underline{\mathcal{A}})$ is an equivalence if and only if $\underline{\mathcal{E}}(\Sigma^n(A), B)=0$ for all $A, B\in \mathcal{A}$ and $n\geq 1$. \hfill $\square$
\end{cor}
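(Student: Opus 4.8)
The plan is to reduce the corollary directly to Proposition~\ref{prop:KV}, using the identification of lower extension groups provided by Example~\ref{exm:KV}. First I would check that the hypothesis of Proposition~\ref{prop:KV} is satisfied in the setting of Example~\ref{exm:KV}, namely that for every $B\in\mathcal{A}$ a $\mathcal{P}$-resolution $X_B$ exists. This is standard: since $\mathcal{E}$ is a Frobenius exact category and $\mathcal{A}\supseteq\mathcal{P}$, one builds the resolution $\cdots\to P^{-2}\to P^{-1}\to B\to 0$ by repeatedly taking admissible epimorphisms from projective-injective objects onto the successive syzygies; because the objects of $\mathcal{P}$ are also injective, the complex $\mathrm{Hom}_\mathcal{E}(P,X_B)$ is acyclic for each $P\in\mathcal{P}$, so $X_B$ is a genuine $\mathcal{P}$-resolution in the sense of Section~2. (Dually one could instead invoke the existence of $_AX$ using injectivity of $\mathcal{P}$; either half of the disjunctive hypothesis suffices.)

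Next I would apply Proposition~\ref{prop:KV} with $\mathcal{X}=\mathcal{P}$: the canonical functor $\Phi\colon \mathbf{K}^b(\mathcal{A})/\mathbf{K}^b(\mathcal{P})\to\mathbf{K}^b(\mathcal{A}/[\mathcal{P}])=\mathbf{K}^b(\underline{\mathcal{A}})$ is an equivalence if and only if ${\rm Ext}_{\mathcal{P},n}(-,-)=0$ for all $n\geq 1$. Then I would substitute the isomorphism of Example~\ref{exm:KV}, namely ${\rm Ext}_{\mathcal{P},n}(A,B)\simeq\underline{\mathcal{E}}(\Sigma^n(A),B)$ for $A,B\in\mathcal{A}$ and $n\geq 0$, to rewrite the vanishing condition: ${\rm Ext}_{\mathcal{P},n}(-,-)=0$ for all $n\geq 1$ is equivalent to $\underline{\mathcal{E}}(\Sigma^n(A),B)=0$ for all $A,B\in\mathcal{A}$ and $n\geq 1$. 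Combining the two equivalences gives exactly the statement of the corollary.

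The only genuine point requiring care — and hence the main (mild) obstacle — is verifying that the hypothesis of Proposition~\ref{prop:KV} genuinely holds, i.e. that one really does have honest $\mathcal{P}$-(co)resolutions in the Frobenius setting, rather than merely $\mathcal{E}$-admissible resolutions. The subtlety is that the resolution must be taken in $\mathcal{A}$ (the complex $X_B$ has terms in $\mathcal{P}\subseteq\mathcal{A}$ and augments to $B$), while the acyclicity of $\mathrm{Hom}(P,X_B)$ is a statement about Hom groups that one checks in $\mathcal{E}$; since $\mathcal{A}$ is a full subcategory of $\mathcal{E}$ these Hom groups agree, and the acyclicity is precisely the defining property of an admissible resolution by projective-injectives in a Frobenius category. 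Once this bookkeeping is in place, the corollary is immediate, which is why it is stated with the proof symbol \hfill $\square$ and no further argument; the substance lies entirely in Proposition~\ref{prop:KV}, Theorem~\ref{thm:1}, and the computation in Example~\ref{exm:KV}.
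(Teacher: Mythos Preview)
Your proposal is correct and follows exactly the route the paper intends: the corollary is presented as an immediate consequence of Proposition~\ref{prop:KV} combined with the identification ${\rm Ext}_{\mathcal{P},n}(A,B)\simeq\underline{\mathcal{E}}(\Sigma^n(A),B)$ from Example~\ref{exm:KV}, and you have faithfully unpacked this. One tiny slip: the acyclicity of $\mathrm{Hom}_\mathcal{E}(P,X_B)$ for $P\in\mathcal{P}$ comes from $P$ being \emph{projective} (lifting along the admissible epimorphisms in the resolution), not from injectivity; of course in the Frobenius setting the two coincide, so the argument stands.
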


The following result is a seemingly new characterization of hereditary abelian categories. It follows directly from  Example \ref{exm:here} and Proposition \ref{prop:KV}.

\begin{cor}\label{cor:here}
Let $\mathcal{A}$ be an abelian category with enough projectives and enough injectives. Then the following statements are equivalent:
\begin{enumerate}
\item the canonical functor $\mathbf{K}^b(\mathcal{A})/\mathbf{K}^b(\mathcal{P})\longrightarrow \mathbf{K}^b(\underline{\mathcal{A}})$ is an equivalence;
\item the abelian category $\mathcal{A}$ is hereditary;
    \item the canonical functor $\mathbf{K}^b(\mathcal{A})/\mathbf{K}^b(\mathcal{I})\longrightarrow \mathbf{K}^b(\overline{\mathcal{A}})$ is an equivalence. \hfill $\square$
\end{enumerate}
\end{cor}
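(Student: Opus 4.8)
The plan is to read off all three statements as direct instances of Proposition \ref{prop:KV}, combined with the homological characterization of hereditary categories recorded in Example \ref{exm:here}, for the two canonical choices $\mathcal{X}=\mathcal{P}$ and $\mathcal{X}=\mathcal{I}$. Throughout, the relevant Verdier quotients are assumed to be well defined, as in the standing hypothesis of Section 3.

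First I would take $\mathcal{X}=\mathcal{P}$. Because $\mathcal{A}$ has enough projectives, every object $B$ admits a projective resolution; since ${\rm Hom}_\mathcal{A}(P,-)$ is exact for $P\in\mathcal{P}$, such a resolution is a $\mathcal{X}$-resolution $X_B$ in the sense of Section 2, so the hypothesis of Proposition \ref{prop:KV} is met. That proposition then tells us that the canonical functor $\mathbf{K}^b(\mathcal{A})/\mathbf{K}^b(\mathcal{P})\to\mathbf{K}^b(\mathcal{A}/[\mathcal{P}])=\mathbf{K}^b(\underline{\mathcal{A}})$ is an equivalence if and only if ${\rm Ext}_{\mathcal{P},n}(-,-)=0$ for all $n\geq 1$. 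By Example \ref{exm:here} this vanishing is precisely the assertion that $\mathcal{A}$ is hereditary, which yields the equivalence (1) $\Leftrightarrow$ (2).

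Next I would run the dual argument with $\mathcal{X}=\mathcal{I}$: since $\mathcal{A}$ has enough injectives, each object $A$ has an injective coresolution, which is a $\mathcal{X}$-coresolution $_AX$, so Proposition \ref{prop:KV} again applies and shows that $\mathbf{K}^b(\mathcal{A})/\mathbf{K}^b(\mathcal{I})\to\mathbf{K}^b(\overline{\mathcal{A}})$ is an equivalence if and only if ${\rm Ext}_{\mathcal{I},n}(-,-)=0$ for all $n\geq 1$; by the dual half of Example \ref{exm:here} this is once more equivalent to $\mathcal{A}$ being hereditary, giving (2) $\Leftrightarrow$ (3) and closing the circle. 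Since each step merely invokes a result already in place, I do not expect a genuine obstacle; the one point requiring a little care is the routine verification that a classical projective resolution (resp.\ injective coresolution) really is a relative $\mathcal{X}$-(co)resolution, so that Proposition \ref{prop:KV} may legitimately be applied.
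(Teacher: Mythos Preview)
Your proposal is correct and matches the paper's own argument exactly: the corollary is stated with a $\square$ and the line preceding it says it ``follows directly from Example~\ref{exm:here} and Proposition~\ref{prop:KV}.'' Your only addition is spelling out why a projective resolution (resp.\ injective coresolution) is a $\mathcal{P}$-resolution (resp.\ $\mathcal{I}$-coresolution), which is indeed routine.
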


\begin{rem}
Assume that the abelian category $\mathcal{A}$ has finite global dimension. Denote by $\mathbf{D}^b(\mathcal{A})$ the bounded derived category of $\mathcal{A}$, and by $\mathbf{K}_{\rm ac}^b(\mathcal{A})$ the homotopy category of bounded acyclic complexes.  Then we have a well-known recollement \cite{BBD}
\[\xymatrix{
\mathbf{K}_{\rm ac}^b(\mathcal{A}) \ar[rr]|{{\rm inc}} &&  \mathbf{K}^b(\mathcal{A}) \ar[rr]|{\rm can}  \ar@/_1pc/[ll] \ar@/^1pc/[ll] &&   \mathbf{D}^b(\mathcal{A}).   \ar@/_1pc/[ll]|{\bf p} \ar@/^1pc/[ll]|{\bf i}
}\]
Here, ${\bf p}$ sends a complex to its projective resolution; in particular, its essential image is $\mathbf{K}^b(\mathcal{P})$. So, we obtain a triangle equivalence
$$\mathbf{K}_{\rm ac}^b(\mathcal{A})\simeq \mathbf{K}^b(\mathcal{A})/\mathbf{K}^b(\mathcal{P}).$$
 Dually, we have the triangle equivalence
 $$\mathbf{K}_{\rm ac}^b(\mathcal{A})\simeq \mathbf{K}^b(\mathcal{A})/\mathbf{K}^b(\mathcal{I}).$$

 Assume now that $\mathcal{A}$ is hereditary. Combining the above equivalences with Corollary \ref{cor:here}, we have a triangle equivalence
 $$ \mathbf{K}^b(\underline{\mathcal{A}})\simeq  \mathbf{K}^b(\overline{\mathcal{A}}).$$
 We mention that if $\mathcal{A}$ is the category of finitely generated modules over an artin algebra, then the stable categories $\underline{\mathcal{A}}$ and $\overline{\mathcal{A}}$ are already equivalent via the Auslander-Reiten translations.
\end{rem}

\begin{exm}\label{exm:non-faith}
{\rm

Let $k$ be a field and $\Lambda=k[t]/(t^2)$ be the algebra of dual numbers. Then the stable module category $\Lambda\mbox{-\underline{mod}}$ is equivalent to the category of finite dimensional $k$-modules. It follows that any complex in $\mathbf{K}^b(\Lambda\mbox{-\underline{mod}})$ is isomorphic to a finite direct sum of stalk complexes. Then the canonical functor
$$\Phi\colon \mathbf{K}^b(\Lambda\mbox{-mod})/\mathbf{K}^b(\Lambda\mbox{-proj}) \longrightarrow \mathbf{K}^b(\Lambda\mbox{-\underline{mod}})$$
is dense. However, since $\Lambda\mbox{-mod}$ is not hereditary, by Corollary \ref{cor:here} the functor $\Phi$ is not an equivalence.
}\end{exm}

\section{The Tor groups and dg quotients}

Let $k$ be a commutative ring, and let $\mathcal{A}$ be a $k$-linear additive category. Assume that $\mathcal{A}$ is skeletally small. By choosing a skeleton, we might assume further that $\mathcal{A}$ is small.

 We view $\mathcal{A}$ and $\mathcal{X}$ as dg categories concentrated in degree zero. The following treatment is similar to \cite[Subsection 7.2]{KY} and \cite[Subsection 5.2]{CC}. For dg quotient categories, we refer to \cite{Kel99, Dri}.

We recall the construction of the dg quotient category. Take a semi-free resolution $\pi\colon \tilde{\mathcal{A}}\rightarrow \mathcal{A}$ as in \cite[Lemma B.5]{Dri}. We identify the objects of $\tilde{\mathcal{A}}$ with those  of $\mathcal{A}$. Denote by $\tilde{\mathcal{X}}$ the full dg subcategory of $\tilde{\mathcal{A}}$ formed by objects in $\mathcal{X}$. Then we have a new dg category $\tilde{\mathcal{A}}/{\tilde{\mathcal{X}}}$ as follows: the objects are the same as in $\tilde{\mathcal{A}}$; for each object $X\in \tilde{\mathcal{X}}$, we freely add a new endomorphism $\varepsilon_X$ of degree $-1$ and set $d(\varepsilon_X)={\rm Id}_X$. For details, we refer to \cite[Subsection 3.1]{Dri}.

By abuse of notation, the resulting dg category $\tilde{\mathcal{A}}/{\tilde{\mathcal{X}}}$ will be denoted by $\mathcal{A}/\mathcal{X}$, called the \emph{dg quotient category} of $\mathcal{A}$ by $\mathcal{X}$. This notation is justified by the fact that $\tilde{\mathcal{A}}/{\tilde{\mathcal{X}}}$ is uniquely determined up to quasi-equivalence; see \cite[1.6.2 Main Theorem]{Dri}.

Thanks to the isomorphisms in Proposition \ref{prop:ext-tor}, the following observation interprets
 the lower extension groups as the negative cohomological groups of the Hom complexes in $\mathcal{A}/\mathcal{X}$.

\begin{prop}\label{prop:dg-qu}
Keep the assumptions and  notation as above. Then for any objects $A, B\in \mathcal{A}/\mathcal{X}$, the Hom complex $\mathcal{A}/\mathcal{X}(A, B)$ is non-positively graded such that the following statements hold.
\begin{enumerate}
\item $H^0(\mathcal{A}/\mathcal{X}(A, B))\simeq \mathcal{A}/[\mathcal{X}](A, B)$.
\item $H^{-1}(\mathcal{A}/\mathcal{X}(A, B))$ is isomorphic to the kernel of  (\ref{equ:can}).
\item For each $n\geq 2$, $H^{-n}(\mathcal{A}/\mathcal{X}(A, B))\simeq {\rm Tor}^\mathcal{X}_{n-1}(\mathcal{A}(-, B), \mathcal{A}(A, -)).$
\end{enumerate}
\end{prop}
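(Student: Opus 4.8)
The plan is to compute the Hom complex $\mathcal{A}/\mathcal{X}(A,B)$ explicitly using Drinfeld's description of the dg quotient, and then identify its cohomology with the bar-type resolution computing $\mathrm{Tor}^{\mathcal{X}}$. First I would recall from \cite[Subsection 3.1]{Dri} that a morphism in $\tilde{\mathcal{A}}/\tilde{\mathcal{X}}$ from $A$ to $B$ is an alternating composite of morphisms in $\tilde{\mathcal{A}}$ and the formal arrows $\varepsilon_{X}$; since $\tilde{\mathcal{A}}$ is a semi-free resolution of $\mathcal{A}$ concentrated in degree zero, and the only negative-degree generators are the $\varepsilon_{X}$ (each of degree $-1$), a degree $-n$ morphism $A\to B$ is a $k$-linear combination of zig-zags $A \xrightarrow{f_{0}} X_{1} \xrightarrow{\varepsilon_{X_{1}}} X_{1} \xrightarrow{f_{1}} X_{2} \xrightarrow{\varepsilon_{X_{2}}} \cdots \xrightarrow{\varepsilon_{X_{n}}} X_{n} \xrightarrow{f_{n}} B$ — modulo the contributions of the semi-free resolution $\tilde{\mathcal{A}}$ in positive degrees, which only affect how the $f_{i}$ are interpreted, not the underlying count. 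This shows immediately that the Hom complex is non-positively graded, giving the first assertion of the proposition statement, and that $H^{0}=\mathcal{A}/[\mathcal{X}](A,B)$ by the standard fact that $H^{0}(\mathcal{A}/\mathcal{X})\simeq \mathcal{A}/[\mathcal{X}]$ recalled in the introduction (or directly: in degree $0$ one quotients $\mathcal{A}(A,B)$ by the image of $d$, which is exactly $[\mathcal{X}](A,B)$).

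The main point is to recognize the degree $-n$ part together with the differential. After collapsing the repeated objects $X_{i}$, a degree $-n$ morphism is essentially an element of
\[
\bigoplus_{X_{1},\dots,X_{n}\in \mathcal{X}} \mathcal{A}(X_{n},B)\otimes_{k}\mathcal{X}(X_{n-1},X_{n})\otimes_{k}\cdots\otimes_{k}\mathcal{X}(X_{1},X_{2})\otimes_{k}\mathcal{A}(A,X_{1}),
\]
and the Leibniz rule applied to $\varepsilon_{X}$ (with $d\varepsilon_{X}=\mathrm{Id}_{X}$) produces precisely the alternating sum of the composition/face maps of the normalized bar complex of the category $\mathcal{X}$ with coefficients in the right module $\mathcal{A}(-,B)$ and the left module $\mathcal{A}(A,-)$. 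Rather than verify the bar differential signs by hand, I would instead argue structurally: by \cite[1.6.2 Main Theorem]{Dri} the Hom complex is independent of the choice of semi-free resolution up to quasi-isomorphism, so I may compute $H^{*}\mathcal{A}/\mathcal{X}(A,B)$ using the standard cone-type model, which identifies it with the complex $\mathrm{Tot}\bigl(\cdots \to \mathcal{A}(X_{B}^{-2},B)\text{-type terms}\bigr)$ — more precisely, I would show the Hom complex is quasi-isomorphic to $\mathcal{A}(-,B)\otimes^{\mathbf{L}}_{\mathcal{X}}\mathcal{A}(A,-)$ shifted so that $\mathrm{Tor}_{j}$ sits in degree $-j$, with a correction in degrees $0$ and $-1$ coming from the canonical map (\ref{equ:can}). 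Concretely, the bar complex $B_{\bullet}$ computing $\mathrm{Tor}^{\mathcal{X}}_{\bullet}(\mathcal{A}(-,B),\mathcal{A}(A,-))$ has $B_{0}=\mathcal{A}(-,B)\otimes_{\mathcal{X}}\mathcal{A}(A,-)$, and the dg-quotient Hom complex is obtained by splicing $B_{\bullet}$ in degrees $\leq -1$ with $\mathcal{A}(A,B)$ in degree $0$ along the map $\mathrm{can}$; hence $H^{0}=\mathrm{coker}(\mathrm{can})=\mathcal{A}/[\mathcal{X}](A,B)$, $H^{-1}=\ker(\mathrm{can})$, and $H^{-n}=H_{n-1}(B_{\bullet})=\mathrm{Tor}^{\mathcal{X}}_{n-1}$ for $n\geq 2$, which are exactly statements (1), (2), (3).

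Alternatively, and this is probably cleaner, I would bypass the bar complex entirely and use Proposition \ref{prop:ext-tor} together with the relation (recalled in the introduction) that $\mathbf{K}^{b}(\mathcal{A})/\mathbf{K}^{b}(\mathcal{X})$ is the triangulated hull of $\mathcal{A}/\mathcal{X}$: by \cite[Subsection 3.1]{Dri} the Hom complex $\mathcal{A}/\mathcal{X}(A,B)$ computes, via its cohomology, the graded Hom space $\mathrm{Hom}_{\mathbf{K}^{b}(\mathcal{A})/\mathbf{K}^{b}(\mathcal{X})}(\Sigma^{n}(A),B)$ in degree $-n$; then Theorem \ref{thm:1} identifies these with ${\rm Ext}_{\mathcal{X},n}(A,B)$ for $n\geq 1$, and Proposition \ref{prop:ext-tor} rewrites the latter as $\mathrm{Tor}^{\mathcal{X}}_{n-1}$ for $n\geq 2$, as the kernel of (\ref{equ:can}) for $n=1$, and the degree-zero part is $\mathcal{A}/[\mathcal{X}](A,B)$ by part (2) of that theorem. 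The hardest part of either route is making the degree-$(-n)$ identification genuinely rigorous: in the direct approach one must carefully match Drinfeld's sign conventions for $d(\varepsilon_{X})$ and the composition in $\tilde{\mathcal{A}}/\tilde{\mathcal{X}}$ with the bar differential, handling the semi-free resolution $\tilde{\mathcal{A}}$ (whose positive-degree part contributes to the Hom complex and must be shown not to affect cohomology in the relevant degrees, using that $\pi\colon\tilde{\mathcal{A}}\to\mathcal{A}$ is a quasi-isomorphism in each Hom complex); in the indirect approach one must make sure the equivalence $\mathbf{K}^{b}(\mathcal{A})/\mathbf{K}^{b}(\mathcal{X})\simeq(\mathcal{A}/\mathcal{X})^{\mathrm{tr}}$ is compatible with the grading/suspension, so that the degree $-n$ cohomology of the Hom complex really is $\mathrm{Hom}(\Sigma^{n}(A),B)$ and not, say, $\mathrm{Hom}(\Sigma^{-n}(A),B)$. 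I expect to adopt the indirect route in the main text, since it reuses machinery already developed, relegating the sign bookkeeping of the direct bar-complex computation to a remark.
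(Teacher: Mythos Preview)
Your direct route is essentially the paper's proof: the key input is precisely the short exact sequence
\[
0\longrightarrow \tilde{\mathcal{A}}(A,B)\longrightarrow \tilde{\mathcal{A}}/\tilde{\mathcal{X}}(A,B)\longrightarrow \Sigma\bigl(\mathbf{B}\otimes_{\tilde{\mathcal{X}}}\tilde{\mathcal{A}}(-,A)\bigr)\longrightarrow 0
\]
from \cite[Subsection 3.1]{Dri}, where $\mathbf{B}$ is the bar resolution of $\tilde{\mathcal{A}}(-,B)$ as a right dg $\tilde{\mathcal{X}}$-module. The paper simply cites this sequence and reads off the long exact sequence in cohomology, using that $\tilde{\mathcal{A}}(A,B)$ is quasi-isomorphic to the stalk $\mathcal{A}(A,B)$ and that $\mathbf{B}\otimes_{\tilde{\mathcal{X}}}\tilde{\mathcal{A}}(-,A)$ computes $\mathcal{A}(-,B)\otimes^{\mathbf{L}}_{\mathcal{X}}\mathcal{A}(A,-)$. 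Your ``splicing $B_{\bullet}$ with $\mathcal{A}(A,B)$ along $\mathrm{can}$'' is exactly this, so there is no need to match bar-differential signs by hand. One correction: the semi-free resolution $\tilde{\mathcal{A}}$ has \emph{non-positively} graded Hom complexes, not positive-degree contributions as you wrote; this is in fact what makes the non-positivity of $\mathcal{A}/\mathcal{X}(A,B)$ immediate.

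Your preferred indirect route, however, has two genuine problems. First, Theorem~\ref{thm:1}(3) requires the existence of $X_{B}$ or $_{A}X$, whereas Proposition~\ref{prop:dg-qu} is stated without any resolution hypothesis (only skeletal smallness); so you would be proving a strictly weaker statement. Second, and more seriously, the paper's purpose in Section~5 is to give an \emph{alternative} proof of Theorem~\ref{thm:1} via Proposition~\ref{prop:dg-tor}, which in turn rests on Proposition~\ref{prop:dg-qu}. If you prove Proposition~\ref{prop:dg-qu} by invoking Theorem~\ref{thm:1}, that alternative proof becomes circular. Stick with the direct route and cite Drinfeld's exact sequence rather than rebuilding it.
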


\begin{proof}
Recall that the Hom complexes in $\tilde{\mathcal{A}}$ are non-positively graded. By the very construction, the same holds for $\tilde{\mathcal{A}}/{\tilde{\mathcal{X}}}=\mathcal{A}/\mathcal{X}$.

The quasi-equivalence $\pi\colon \tilde{\mathcal{X}}\rightarrow \mathcal{X}$ implies that $\tilde{\mathcal{X}}$ and $\mathcal{X}$ are derived equivalent. The relevant derived equivalences identify the right dg $\tilde{\mathcal{X}}$-module $\tilde{\mathcal{A}}(-, B)$ with the right $\mathcal{X}$-module $\mathcal{A}(-, B)$, and the left dg $\tilde{\mathcal{X}}$-module $\tilde{\mathcal{A}}(A, -)$ with the left $\mathcal{X}$-module $\mathcal{A}(A, -)$. Moreover, the natural map
$$\tilde{\mathcal{A}}(-, B)\otimes^\mathbb{L}_{\tilde{\mathcal{X}}} \tilde{\mathcal{A}}(-, A)\longrightarrow {\mathcal{A}}(-, B)\otimes^\mathbb{L}_{{\mathcal{X}}} {\mathcal{A}}(-, A)$$
is a quasi-isomorphism. Then we have natural isomorphisms
\begin{align}\label{equ:2-tor}
H^{n}(\tilde{\mathcal{A}}(-, B)\otimes^\mathbb{L}_{\tilde{\mathcal{X}}} \tilde{\mathcal{A}}(-, A))\simeq {\rm Tor}^\mathcal{X}_{-n}(\mathcal{A}(-, B), \mathcal{A}(A, -))
\end{align}
for all $n\in \mathbb{Z}$. Here, ${\rm Tor}^\mathcal{X}_0(\mathcal{A}(-, B), \mathcal{A}(A, -))=\mathcal{A}(-, B)\otimes_\mathcal{X} \mathcal{A}(A, -)$ and by convention ${\rm Tor}^\mathcal{X}_n(\mathcal{A}(-, B), \mathcal{A}(A, -))=0$ for $n<0$.

Recall from \cite[Subsection 3.1]{Dri} that there is an exact sequence of complexes
\begin{align}\label{equ:longex}
0\longrightarrow \tilde{\mathcal{A}}(A, B) \longrightarrow \tilde{\mathcal{A}}/{\tilde{\mathcal{X}}}(A, B) \longrightarrow \Sigma ({\bf B}\otimes_{\tilde{\mathcal{X}}} \tilde{\mathcal{A}}(-, A))\longrightarrow 0,
\end{align}
where ${\bf B}$ denotes the total module of the bar resolution \cite[Subsection 6.6]{Kel94} for the right dg $\tilde{\mathcal{X}}$-module $\tilde{\mathcal{A}}(-, B)$, and $\Sigma$ denotes the suspension functor. In particular, by (\ref{equ:2-tor}) we have
$$H^n(\Sigma ({\bf B}\otimes_{\tilde{\mathcal{X}}} \tilde{\mathcal{A}}(-, A)))\simeq {\rm Tor}^\mathcal{X}_{-n-1}(\mathcal{A}(-, B), \mathcal{A}(A, -)).$$
Since $\tilde{\mathcal{A}}(A, B)$ is quasi-isomorphic to the stalk complex $\mathcal{A}(A, B)$ concentrated in degree zero, the long exact sequence associated to (\ref{equ:longex}) yields the required statements.
\end{proof}

For a dg category $\mathcal{C}$, the \emph{homotopy category} $H^0(\mathcal{C})$ is defined such that its objects are the same as $\mathcal{C}$ and its Hom $k$-modules are the zeroth cohomologies  $H^0(\mathcal{C}(A, B))$ of the Hom complexes $\mathcal{C}(A, B)$. Therefore, Proposition \ref{prop:dg-qu}(1) implies that $H^0(\mathcal{A}/\mathcal{X})$ is isomorphic to the factor category $\mathcal{A}/{[\mathcal{X}]}$.

Denote by $\mathcal{C}^{\rm tr}$ the \emph{triangulated hull} of $\mathcal{C}$; for details, see \cite[Subsection 2.4]{Dri}. We mention that  $H^0(\mathcal{C})$   naturally becomes a full subcategory of $\mathcal{C}^{\rm tr}$. Furthermore,  for any objects $A, B$, we have natural isomorphisms
\begin{align}\label{equ:tr}
\mathcal{C}^{\rm tr}(\Sigma^n(A), B) \simeq H^{-n}(\mathcal{C}(A, B))
\end{align}
for all $n\in \mathbb{Z}$. Here, $\Sigma$ denotes the suspension functor on $\mathcal{C}^{\rm tr}$.

In view of  Proposition \ref{prop:ext-tor}, we now actually give another proof of Theorem \ref{thm:1} via the dg method.

\begin{prop}\label{prop:dg-tor}
Let $\mathcal{A}$ be a skeletally small $k$-linear additive category and  $\mathcal{X}\subseteq \mathcal{A}$ be a full additive subcategory. Then there is a triangle equivalence
$$\mathbf{K}^b(\mathcal{A})/\mathbf{K}^b(\mathcal{X})\simeq (\mathcal{A}/\mathcal{X})^{\rm tr}.$$
Consequently, for any $A, B\in \mathcal{A}$, the following isomorphisms hold:
\begin{enumerate}
\item ${\rm Hom}_{\mathbf{K}^b(\mathcal{A})/\mathbf{K}^b(\mathcal{X})}(\Sigma^n(A), B)=0$ for $n<0$;
\item ${\rm Hom}_{\mathbf{K}^b(\mathcal{A})/\mathbf{K}^b(\mathcal{X})}(A, B)\simeq \mathcal{A}/[\mathcal{X}](A, B)$;
    \item ${\rm Hom}_{\mathbf{K}^b(\mathcal{A})/\mathbf{K}^b(\mathcal{X})}(\Sigma^n(A), B)\simeq {\rm Tor}^\mathcal{X}_{n-1}(\mathcal{A}(-, B), \mathcal{A}(A, -))$ for $n\geq 2$; moreover,  ${\rm Hom}_{\mathbf{K}^b(\mathcal{A})/\mathbf{K}^b(\mathcal{X})}(\Sigma(A), B)$ is isomorphic to the kernel of (\ref{equ:can}).
\end{enumerate}
\end{prop}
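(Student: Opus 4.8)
The plan is to establish the triangle equivalence $\mathbf{K}^b(\mathcal{A})/\mathbf{K}^b(\mathcal{X})\simeq (\mathcal{A}/\mathcal{X})^{\rm tr}$ first, and then read off items (1)--(3) by combining it with the cohomology computation of the Hom complexes $\mathcal{A}/\mathcal{X}(A,B)$ already obtained in Proposition \ref{prop:dg-qu}, together with the universal property \eqref{equ:tr} of the triangulated hull. The equivalence itself is essentially a known statement about dg quotients: recall that for a dg category $\mathcal{C}$ one has $\mathcal{C}^{\rm tr}\simeq {\rm per}(\mathcal{C})$ (the perfect derived category, or equivalently the thick subcategory of the derived category $\mathbf{D}(\mathcal{C})$ generated by the representable modules), and that $\mathbf{K}^b(\mathcal{A})$ is the triangulated hull of $\mathcal{A}$ viewed as a dg category concentrated in degree zero, i.e. $\mathbf{K}^b(\mathcal{A})\simeq {\rm per}(\mathcal{A})$. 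By the universal property of the dg (Drinfeld/Keller) quotient \cite[1.6.2 Main Theorem]{Dri}, the quotient functor $\mathbf{D}(\mathcal{A})\to\mathbf{D}(\mathcal{A}/\mathcal{X})$ identifies $\mathbf{D}(\mathcal{A}/\mathcal{X})$ with the Verdier quotient $\mathbf{D}(\mathcal{A})/\langle\mathcal{X}\rangle$ by the localizing subcategory generated by the representable modules $\mathcal{A}(-,X)$, $X\in\mathcal{X}$. Restricting to compact objects (perfect complexes) and using that $\mathbf{K}^b(\mathcal{X})$ is precisely the thick subcategory of $\mathbf{K}^b(\mathcal{A})\simeq{\rm per}(\mathcal{A})$ generated by the $\mathcal{A}(-,X)$, one gets $(\mathcal{A}/\mathcal{X})^{\rm tr}\simeq{\rm per}(\mathcal{A}/\mathcal{X})\simeq {\rm per}(\mathcal{A})/\mathbf{K}^b(\mathcal{X})\simeq \mathbf{K}^b(\mathcal{A})/\mathbf{K}^b(\mathcal{X})$, where the middle step uses the Neeman--Thomason localization theorem (compact objects of a Verdier quotient of compactly generated categories), possibly after passing to idempotent completions; since $(\mathcal{A}/\mathcal{X})^{\rm tr}$ is already idempotent-complete, and $\mathbf{K}^b(\mathcal{A})/\mathbf{K}^b(\mathcal{X})$ need not be, one should be slightly careful here, but the Hom groups — which is all we need for (1)--(3) — are unaffected by idempotent completion.

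Granting the equivalence, items (1)--(3) are immediate: for $A,B\in\mathcal{A}$, the object $\Sigma^n(A)$ in $\mathbf{K}^b(\mathcal{A})/\mathbf{K}^b(\mathcal{X})$ corresponds under the equivalence to $\Sigma^n(A)$ in $(\mathcal{A}/\mathcal{X})^{\rm tr}$ (both $\mathcal{A}$ and its dg replacement sit inside as $H^0$), so \eqref{equ:tr} gives
$${\rm Hom}_{\mathbf{K}^b(\mathcal{A})/\mathbf{K}^b(\mathcal{X})}(\Sigma^n(A),B)\simeq (\mathcal{A}/\mathcal{X})^{\rm tr}(\Sigma^n(A),B)\simeq H^{-n}(\mathcal{A}/\mathcal{X}(A,B)).$$
Now invoke Proposition \ref{prop:dg-qu}: the Hom complex $\mathcal{A}/\mathcal{X}(A,B)$ is non-positively graded, which yields (1); its $H^0$ is $\mathcal{A}/[\mathcal{X}](A,B)$, which yields (2); its $H^{-1}$ is the kernel of \eqref{equ:can} and its $H^{-n}$ for $n\geq 2$ is ${\rm Tor}^\mathcal{X}_{n-1}(\mathcal{A}(-,B),\mathcal{A}(A,-))$, which yields (3).

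The main obstacle I expect is the bookkeeping around the equivalence $\mathbf{K}^b(\mathcal{A})/\mathbf{K}^b(\mathcal{X})\simeq(\mathcal{A}/\mathcal{X})^{\rm tr}$: one must (i) match the triangulated hull of the dg quotient with a Verdier quotient of perfect derived categories via Drinfeld's universal property, which requires knowing that the dg quotient functor kills exactly $\mathbf{K}^b(\mathcal{X})$ and nothing more, and (ii) handle the idempotent-completeness discrepancy — the triangulated hull is by construction idempotent-complete (being defined inside a derived category), whereas $\mathbf{K}^b(\mathcal{A})/\mathbf{K}^b(\mathcal{X})$ is only known to be so when, e.g., $\mathcal{A}$ is Krull--Schmidt. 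The clean way around (ii), consistent with the statement's claim of a genuine equivalence, is either to observe that on the full subcategory spanned by objects of $\mathcal{A}$ the two categories already have the same morphisms (which is all the corollary ``Consequently'' needs), or to note that $\mathbf{K}^b(\mathcal{X})$ is a thick subcategory and that taking Verdier quotients by thick subcategories of $\mathbf{K}^b(\mathcal{A})$ behaves well under idempotent completion, so the two sides have the same idempotent completion and hence — since the dg-quotient side is already split-closed — the functor $\mathbf{K}^b(\mathcal{A})/\mathbf{K}^b(\mathcal{X})\to(\mathcal{A}/\mathcal{X})^{\rm tr}$ is at worst a dense full embedding onto a category with the same idempotent completion; in the generality of a skeletally small $k$-linear $\mathcal{A}$ one may simply record the equivalence ``up to direct summands'' and note this does not affect the Hom-group formulas (1)--(3). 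Everything else is formal once Proposition \ref{prop:dg-qu} and \eqref{equ:tr} are in hand.
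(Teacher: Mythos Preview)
Your overall plan---establish the equivalence $\mathbf{K}^b(\mathcal{A})/\mathbf{K}^b(\mathcal{X})\simeq(\mathcal{A}/\mathcal{X})^{\rm tr}$, then read off (1)--(3) from \eqref{equ:tr} and Proposition~\ref{prop:dg-qu}---is exactly what the paper does. The difference lies in how you justify the equivalence.

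The paper simply identifies $\mathbf{K}^b(\mathcal{A})\simeq\mathcal{A}^{\rm tr}$, $\mathbf{K}^b(\mathcal{X})\simeq\mathcal{X}^{\rm tr}$, and then invokes \cite[Theorem~3.4 and Subsection~3.5]{Dri}, which states directly that the triangulated hull of the dg quotient is the Verdier quotient of the triangulated hulls: $(\mathcal{A}/\mathcal{X})^{\rm tr}\simeq\mathcal{A}^{\rm tr}/\mathcal{X}^{\rm tr}$. No passage through derived categories, compact objects, or Neeman--Thomason is needed, and no idempotent-completion issue arises.

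Your detour through ${\rm per}(-)$ and Neeman--Thomason creates the very obstacle you then struggle with, and it rests on a mistaken premise: in Drinfeld's conventions (\cite[Subsection~2.4]{Dri}, which the paper follows), the triangulated hull $\mathcal{C}^{\rm tr}$ is the \emph{pretriangulated} hull and is \emph{not} idempotent-complete in general---it is $\mathcal{C}^{\rm perf}$ (or ${\rm per}(\mathcal{C})$) that is. So your assertion that ``$(\mathcal{A}/\mathcal{X})^{\rm tr}$ is already idempotent-complete'' is false, and your Neeman--Thomason argument only yields the equivalence up to direct summands. That is enough for the Hom-group statements (1)--(3), as you note, but it does not prove the triangle equivalence as stated. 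Citing Drinfeld's Theorem~3.4 directly removes the problem entirely.
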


\begin{proof}
We identify $\mathbf{K}^b(\mathcal{A})$ with $\mathcal{A}^{\rm tr}$, and $\mathbf{K}^b(\mathcal{X})$ with $\mathcal{X}^{\rm tr}$. Then the triangle equivalence follows from a general result \cite[Theorem 3.4 and Subsection 3.5]{Dri}. In view of the isomorphisms (\ref{equ:tr}) for the dg quotient category $\mathcal{A}/\mathcal{X}$,  the remaining statements follow from the triangle equivalence and Proposition \ref{prop:dg-qu}.
\end{proof}

The following consequence is analogous to Proposition \ref{prop:KV}. We omit the same reasoning.

\begin{cor}
Keep the same assumptions as above. Then the canonical functor $\Phi\colon \mathbf{K}^b(\mathcal{A})/\mathbf{K}^b(\mathcal{X})\rightarrow \mathbf{K}^b(\mathcal{A}/[\mathcal{X}])$ is an equivalence if and only if the canonical map (\ref{equ:can}) is injective and ${\rm Tor}^\mathcal{X}_{n}(\mathcal{A}(-, B), \mathcal{A}(A, -))=0$ for all $n\geq 1$. \hfill $\square$
\end{cor}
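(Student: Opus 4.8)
The plan is to combine the equivalence criterion already isolated in the proof of Proposition~\ref{prop:can}(2) with the Hom-group computations of Proposition~\ref{prop:dg-tor}, in exactly the same way as in the proof of Proposition~\ref{prop:KV}.

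First I would recall from that earlier argument that $\Phi$ is an equivalence if and only if
$${\rm Hom}_{\mathbf{K}^b(\mathcal{A})/\mathbf{K}^b(\mathcal{X})}(\Sigma^n(A), B)=0$$
for all $A, B\in \mathcal{A}$ and all $n\neq 0$: the ``only if'' direction uses the faithfulness of $\Phi$ together with the vanishing of ${\rm Hom}_{\mathbf{K}^b(\mathcal{A}/[\mathcal{X}])}(\Sigma^n(A), B)$ for $n\neq 0$, while the ``if'' direction uses Theorem~\ref{thm:1}(2), \cite[Lemma 1]{Bei} and Proposition~\ref{prop:can}(1). By Proposition~\ref{prop:dg-tor}(1) the displayed group already vanishes for every $n<0$, so it suffices to control the degrees $n\geq 1$.

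Next I would substitute the values of these groups from Proposition~\ref{prop:dg-tor}(3): for $n=1$ the group is the kernel of the canonical map (\ref{equ:can}), and for $n\geq 2$ it is ${\rm Tor}^\mathcal{X}_{n-1}(\mathcal{A}(-, B), \mathcal{A}(A, -))$. Therefore the vanishing for all $n\geq 1$ and all $A, B$ is equivalent to the injectivity of (\ref{equ:can}) for all $A, B$ together with ${\rm Tor}^\mathcal{X}_{m}(\mathcal{A}(-, B), \mathcal{A}(A, -))=0$ for all $m\geq 1$ and all $A, B$. Combined with the reduction of the previous paragraph, this is precisely the asserted condition.

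No genuine obstacle arises; the statement is a formal consequence of the two ingredients above. The only points requiring minor care are the shift in indexing (cohomological degree $n$ versus homological degree $n-1$ for ${\rm Tor}$) and the fact that the degree-one term appears as an honest kernel rather than as a ${\rm Tor}$ group, reflecting that the bar resolution only computes ${\rm Tor}$ in strictly positive degrees.
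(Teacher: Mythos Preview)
Your proposal is correct and follows exactly the approach the paper intends: the paper omits the proof, stating only that it is analogous to Proposition~\ref{prop:KV}, and your argument is precisely that analogous reasoning spelled out, using Proposition~\ref{prop:dg-tor} in place of Theorem~\ref{thm:1}(3).
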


\vskip 5pt

\noindent{\bf Acknowledgements}.\quad  The authors are grateful to Zhenxing Di and  Yu Ye for helpful comments. This work is supported by the National Natural Science Foundation of China (No.s 11671245 and 11971449),  the Fundamental Research Funds for the Central Universities,  and Anhui Initiative in Quantum Information Technologies (AHY150200).

\bibliography{}

\vskip 10pt

 {\footnotesize \noindent Xiaofa Chen, Xiao-Wu Chen\\
 Key Laboratory of Wu Wen-Tsun Mathematics, Chinese Academy of Sciences,\\
 School of Mathematical Sciences, University of Science and Technology of China, Hefei 230026, Anhui, PR China}

\end{document}